
 \documentclass[11pt,english]{article}

\textheight=8in
\topmargin=-1.0cm
\textwidth=5in
\hoffset=-1.5cm

\usepackage[T1]{fontenc}
\usepackage[latin1]{inputenc}
\usepackage{babel}

\usepackage{setspace}
\setlength\parskip{\medskipamount}
\setlength\parindent{0pt}
\usepackage{indentfirst}

\makeatletter

\usepackage{verbatim}

\makeatother

\usepackage{enumerate}
\usepackage{amsmath}
\usepackage{amssymb}
\usepackage{stmaryrd}
\usepackage{wasysym}
\usepackage{pifont} 
\usepackage{graphicx}
\usepackage[all]{xy}

\usepackage{amsthm}
\theoremstyle{plain}
\newtheorem{theo}{Theorem}[section]
\newtheorem{lem}[theo]{Lemma}

\newtheorem{convention}[theo]{Convention}

\newtheorem{prop}[theo]{Proposition}
\newtheorem{coro}[theo]{Corollary}
\newtheorem{defn}[theo]{Definition}
\theoremstyle{definition}
\newtheorem{example}[theo]{Example}
\theoremstyle{remark}
\newtheorem{rmk}[theo]{Remark}

\renewcommand{\epsilon}{\varepsilon}



\def\dar[#1]{\ar@<2pt>[#1]\ar@<-2pt>[#1]}

\newcommand{\toto}{\rightrightarrows}

\newcommand{\be }{\begin{eqnarray*}}
\newcommand{\ee }{\end{eqnarray*}}



\title{Equivariant resolutions of singularities as differential substacks.}

\author{Camille Laurent-Gengoux  \\
 \\
D\'epartement de math\'ematiques \\
Universit\'e
de Poitiers \\
86962 Futuroscope-Chasseneuil, France \\
\texttt{laurent@math.univ-poitiers.fr} }

\begin{document}
\maketitle

\begin{abstract}
We show that there is an one-to-one correspondence between resolutions (equivariant w.r.t. a Lie groupoid action) of a singular subset 
of a manifold, and substacks (of a certain type) of the differential
stack associated to the Lie groupoid in question. In particular, we
show how to build a resolution out of Lie subgroupoids (of a certain type). 
\end{abstract}

\tableofcontents

\section{Introduction}
 
Differential stacks are, from the very beginning, a way to get rid of
singularities (of a quotient space, of a foliation and so on).
This article intends to convince the reader that there is also something in common between differential stacks
and  resolutions of singularities where "resolutions" has to be taken in
the sense it has in Hironaka's Big Theorem.

We should however confess that we are not able to say anything interesting in the very
general case of arbitrary resolutions of singularities, but we claim 
to have a non-trivial classification result for the case of resolutions equipped with some additional
symmetry  (like a group action, a Poisson or a symplectic structure.)
Our precise claim is that equivariant resolutions of singularities included in the unit manifold
of a Lie groupoid $\Gamma $  are classified by some classes of substacks of the differential stacks associated to $\Gamma $.
(Exactly as differential stacks are Lie groupoids modulo Morita equivalence, differential substacks are Lie subgroupoids modulo Morita
equivalence). Working out this correspondence more accurately, we give a dictionary
between the properties of the equivariant resolutions and the properties of its corresponding
substacks. More precisely,  we give a dictionary between the properties of the 
equivariant resolutions and the corresponding subgroupoids, whose quotient modulo Morita equivalence form the substack in question.

The first issue that one has to face is that resolutions of singularities and differential stacks do not belong to
the same branch of mathematics: resolutions of singularities form a chapter of algebraic geometry,
while  differential stacks are objects within  differential geometry (real or complex).
Indeed, we shall suggest a reasonable notion of ``resolution of singularities'' in the setting of
differential geometry, which contains the algebraic ones (over ${\mathbb R} $ or ${\mathbb C} $) as a
particular case. The correspondence that we are then going to
establish is between some well-chosen substacks of differential stacks 
and equivariant resolutions of singularities, as now defined in differential geometry. 

One may argue that it would be more interesting to remain inside algebraic geometry, and to deal with algebraic stacks, or rather, with algebraic groupoids. The author totally agrees with this objection, and would like simply the reader to allow him to postpone this study to a future work.

The author also totally agrees with the fact the present article lacks of convincing examples. But they exist, of course. As shown in \cite{desing},  symplectic resolutions, as defined by Beauville \cite{Beau}, as-well as Poisson resolutions, as defined by Baohua Fu \cite{FuSum,Fu}, form  classes of examples of equivariant resolutions,(equivariant w.r.t. a symplectic Lie groupoid), which are constructed out of Lie subgroupoids. Indeed, these symplectic resolutions form the initial motivation: the present work should be considered both as a preliminary to \cite{desing} (where these examples are studied), but also as an answer to some question raised in \cite{desing}. In particular, it gives a way to determine whether or not the resolutions associated to two different Lie subgroupoids are isomorphic as resolutions.  


The paper is organized as follows. In section \ref{sec:R1}, the object that we are going to desingularize,
namely closure of $\Gamma $-invariant submanifolds, is introduced, together with the corresponding notion of resolution of singularities,
namely equivariant resolutions. More precisely, these objects are introduced in \ref{sec:R1_D1}, and 
their good  behavior under Morita equivalence is studied in \ref{Morita-resolution}.
Substacks and their various subclasses are introduced in section \ref{Substacks}.
The correspondences between equivariant resolutions and some classes of substacks is then detailed in
section \ref{the_correspondence}, a section entirely devoted to the proof our main result, namely Theorem \ref{fund:theo}.

We will always make use of the following convention about Lie groupoids:

\begin{convention}
The notation most often used to denote a Lie groupoid is $ \Gamma \toto M$, a convenient way to provide the reader 
the names of both sets of arrows and objects, and which underlines on the role of source and target maps,
represented by the two parallel arrows. 
The shorthand $\Gamma$ may be used instead of $ \Gamma \toto M$.
Of course, the structural maps $s,t,\epsilon, \mu,{\rm inv} $ (i.e. source, target, unit, product, inverse) are not explicitly referred to in that notation, but this ambiguity shall never be an issue, since, anyway, we never have to consider two different groupoid structures on the same pair of sets $(\Gamma,M)$, which allows to use the notations $s,t,\epsilon, \mu,{\rm inv} $ to denote the structural maps of {\em all} Lie groupoids that we shall meet in the sequel. For a given groupoid $\Gamma \toto M $, and for $I,J \subset M$, we introduce $\Gamma_I:= s^{-1}(I), \Gamma^J:=t^{-1}(J)  $, and $\Gamma_I^J:= \Gamma_I \cap \Gamma^J $. When $I=\{x\}$ or $ J=\{y\}$ reduce to a point, the shorthands $\Gamma_x , \Gamma^y, \Gamma_x^y$ will be used instead of $\Gamma_{\{x\}} , \Gamma^{\{y\}}, \Gamma_{\{x\}}^{\{y\}}$. 

Also, $M$ will be most of the time considered as a submanifold of $\Gamma$ (in particular, no notational distinction
between $m \in M$  and $\epsilon(m) \in \Gamma$ will be made).
\end{convention}

\section{Equivariant resolution of the closure of $\Gamma $-invariant submanifolds.}
\label{sec:R1}

\subsection{Definition of an equivariant resolution in differential geometry}
 \label{sec:R1_D1}

In the context of algebraic geometry, a {\em resolution} of an affine or projective variety $W$ is pair $(Z,\phi)$ where $Z$ is a smooth (= without singularities) variety, and $\phi: Z \to W $  a regular map from $Z$ to $W$, whose restriction to $\phi^{-1}(W_{reg }) $ is an isomorphism onto $W_{reg}$ (here, $W_{reg}$ stands for the regular part). We shall say that this resolution is {\em surjective} if $\phi (Z) =W$, and {\em proper} if $\phi$ is a proper map. Notice that a proper resolution is always surjective. Notice also that surjectivity 
or properness is often taken as part of the definition  in the literature.

In the context of differential geometry, we suggested in \cite{desing}
to mimic the previous requirements as follows: what plays the role of
$W$ is the closure $\bar{\mathcal S} $ of an embedded submanifold
${\mathcal S} $ in $W$, the role of the regular part $W_{reg} $ being
then played by ${\mathcal S} $ itself. We shall try to desingularize
these objects. By the resolution of the later, in view of the
analogous algebraic case, we suggested in \cite{desing} the following:

\begin{defn} \label{def:desing}
Let $\bar{{\mathcal S}}$ be  the closure of an embedded submanifold ${\mathcal S} $ of a 
complex/real manifold $ M$.  A {\em resolution of $\bar{{\mathcal S}} $} is a pair $(Z,\phi)$ where $Z$ is a
complex/real manifold and $\phi:Z \to M$ is a holomorphic/smooth map such that
\begin{enumerate} 
\item $\phi^{-1}({\mathcal S}) $ is dense in $Z$,
\item the restricted map $\phi :\phi^{-1}({\mathcal S}) \to {\mathcal S}$ is an biholomorphism/diffeomorphism.
\end{enumerate}
We say that this resolution is {\em surjective} if $\phi(Z) = {\bar{\mathcal S}}$ and {\em proper} 
if the map $\phi $ is proper.
\end{defn}
%

Note that, the submanifold ${\mathcal S}$ being assumed to be embedded, ${\mathcal S}$
is an open subset of $\bar{{\mathcal S}} $. We immediately connect this new notion with the traditional one.

\begin{prop}
A resolution (resp. surjective resolution / proper resolution), in the sense of  algebraic geometry over the field
${\mathbb C} $ or ${\mathbb R} $, of an irreducible affine
variety $W \subset {\mathbb C}^N$ or $ W \subset 
{\mathbb R}^N $, is a resolution (resp. surjective resolution / proper resolution), in the sense of
Definition \ref{def:desing} taken in the holomorphic or smooth context, of  the closure of $W_{reg} $.
\end{prop}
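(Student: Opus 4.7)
My approach is to verify, clause by clause, that an algebraic resolution $(Z,\phi)$ of the irreducible affine variety $W$ provides precisely the data required by Definition \ref{def:desing} for the closure of ${\mathcal S}:=W_{reg}$. I would proceed in three steps, dealing in turn with the ambient manifold structure, the two conditions of the definition, and the modifiers \emph{surjective} and \emph{proper}.

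For the ambient manifold structure, I would place $W$ inside $M:={\mathbb C}^N$ (resp.\ ${\mathbb R}^N$). At every point of ${\mathcal S}=W_{reg}$, the Jacobian criterion shows that $W$ is locally cut out by equations of maximal rank, so ${\mathcal S}$ is an embedded complex (resp.\ real) submanifold of $M$, and $\bar{\mathcal S}$ is a well-defined closed subset. A smooth algebraic variety carries a canonical structure of complex (resp.\ real) manifold, so $Z$ is such a manifold, and the regular map $\phi:Z\to W$, viewed after composition with $W\hookrightarrow M$ as a map $Z\to M$, is holomorphic (resp.\ smooth).

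For conditions (1) and (2) of the definition, the preimage $\phi^{-1}({\mathcal S})$ is the complement in $Z$ of the Zariski-closed set $\phi^{-1}(W\setminus W_{reg})$, hence Zariski open in $Z$; it is nonempty since $\phi^{-1}({\mathcal S})\cong{\mathcal S}$. Irreducibility of $W$, combined with the standard convention that a resolution is birational (so that $Z$ is itself irreducible), implies that a nonempty Zariski-open subset of $Z$ is dense in both the Zariski and the analytic topologies; this yields condition (1). Condition (2) is immediate: an algebraic isomorphism between smooth varieties is given locally by regular functions with regular inverses, hence is a biholomorphism in the complex case and a diffeomorphism in the real case.

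For the surjective and proper clauses, irreducibility of $W$ in the complex case yields $\bar{\mathcal S}=W$, so algebraic surjectivity $\phi(Z)=W$ literally matches differential-geometric surjectivity $\phi(Z)=\bar{\mathcal S}$. Algebraic properness of $\phi$ transfers to properness in the analytic topology by standard comparison results. The main subtlety I anticipate is the real case: $\bar{W_{reg}}$ may be strictly smaller than $W$ (because of isolated real singular points not approached by regular points), so the \emph{surjective} clause should be interpreted as surjection onto $\bar{\mathcal S}$ rather than onto $W$; the rest of the argument is unaffected, since only the dense open set ${\mathcal S}$ intervenes.
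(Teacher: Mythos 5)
Your proof is correct and follows essentially the same route as the paper's: density of the nonempty Zariski-open set $\phi^{-1}(W_{reg})$ for the usual topology, the fact that a biregular isomorphism is a biholomorphism/diffeomorphism, and the standard comparison of algebraic and topological properness (the paper cites Shafarevich, I.5.2). Your caveat about the real case is in fact more careful than the paper, which simply asserts $\overline{W_{reg}}=W$ and treats surjectivity as having ``exactly the same meaning'' in all contexts, ignoring real irreducible varieties (such as $y^2=x^2(x-1)$) whose isolated singular points are not limits of regular points.
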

\begin{proof}
To start with, notice that $W_{reg} $ is a (smooth / holomorphic) submanifold of $ {\mathbb R}^N $ or $ {\mathbb C}^N $, with closure $\overline{W_{reg}} =W $ (the closure being taken w.r.t. the usual topology).
Let $(Z,\phi)$ be a resolution in the sense of algebraic
geometry. First, $\phi^{-1}(W_{reg}) $ is a non-empty Zariski open
subset, and is therefore dense for the usual topology  in $Z$. Second,
the restricted map $\phi :\phi^{-1}({\mathcal S}) \to {\mathcal S}$ is
biregular, it is therefore also a biholomorphism or the smooth map, depending on the base field. 
Hence it forms a resolution in the sense of Definition~\ref{def:desing}. 

Surjectivity of the resolution has exactly the same meaning in algebraic, holomorphic or smooth context.
Moreover, if $(Z,\phi) $ is proper is the sense of algebraic geometry,
then it is also proper with respect to the usual topology, see \cite{Sha} section I.5.2.
\end{proof}

A {\em morphism} from a resolution $(Z_1,\phi_1) $ to $(Z_2,\phi_2) $ is a map $\Phi: Z_1 \to Z_2 $ such
that $\phi_2 \circ \Phi = \phi_1 $. The restriction of $\Phi $ to $\phi_1^{-1}({\mathcal S}) $ coincides with
$ \phi_2^{-1} \circ \phi_1 $. By density of $\phi_1^{-1}({\mathcal S}) $ in $Z_1 $, if two resolutions are
isomorphic, then there is  {\em one and exactly one} isomorphism between them. This justifies the following convention.

\begin{convention}\label{con:isom}
From now, we shall identify two isomorphic resolutions.
\end{convention}

Singular spaces that we shall be interested in are $\Gamma $-stable
submanifolds of the base manifold $M$ of a Lie groupoid $\Gamma \toto
M $. More precisely, a subset ${\mathcal S} \subset M$ is said to be {\em
  $\Gamma$-stable} if and only if 
for all $\gamma \in \Gamma$
$$ t(\gamma) \in {\mathcal S} \hspace{0.5cm} \Leftrightarrow \hspace{0.5cm} s(\gamma) \in {\mathcal S} .$$
Equivalently, a subset ${\mathcal S} \subset M$ is $\Gamma$-stable if and only if it is a disjoint union of Lie groupoid orbits.
We start with a Lemma.

\begin{lem}\label{lem:leaf}
Let $\Gamma \toto M$ be a Lie groupoid. For all embedded $\Gamma $-stable submanifold ${\mathcal S} $ of
$M$, its closure $\bar{{\mathcal S}}$ is $\Gamma$-stable.
\end{lem}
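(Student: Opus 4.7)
The statement is symmetric in $s$ and $t$, so it suffices to show one direction: if $\gamma \in \Gamma$ satisfies $t(\gamma) \in \bar{\mathcal S}$, then $s(\gamma) \in \bar{\mathcal S}$. The key input is that in a Lie groupoid the target map $t : \Gamma \to M$ is a surjective submersion, hence admits local smooth sections through any point of $\Gamma$.

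The concrete steps are as follows. First, pick $\gamma_0 \in \Gamma$ with $t(\gamma_0) \in \bar{\mathcal S}$, and choose a sequence $(y_n)_{n \in \mathbb N}$ in ${\mathcal S}$ converging to $t(\gamma_0)$. Using a local section $\sigma : U \to \Gamma$ of $t$ defined on an open neighborhood $U \subset M$ of $t(\gamma_0)$ with $\sigma(t(\gamma_0)) = \gamma_0$, we can, for $n$ large enough, form $\gamma_n := \sigma(y_n) \in \Gamma$. By construction $t(\gamma_n) = y_n \in {\mathcal S}$, and by continuity of $\sigma$ we have $\gamma_n \to \gamma_0$, hence $s(\gamma_n) \to s(\gamma_0)$ by continuity of $s$.

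Now $\Gamma$-stability of ${\mathcal S}$ gives $s(\gamma_n) \in {\mathcal S}$ for every $n$ (since $t(\gamma_n) \in {\mathcal S}$). The sequence $(s(\gamma_n))$ therefore lies in ${\mathcal S}$ and converges to $s(\gamma_0)$, so $s(\gamma_0) \in \bar{\mathcal S}$, as wanted. Exchanging the roles of $s$ and $t$ (equivalently, replacing $\gamma_0$ by $\gamma_0^{-1}$ and using that inversion is a diffeomorphism of $\Gamma$ swapping $s$ and $t$) gives the converse implication, so $\bar{\mathcal S}$ is indeed $\Gamma$-stable.

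The only mildly delicate point is the existence of the local section $\sigma$, but this is immediate from the submersion property of $t$ built into the definition of a Lie groupoid; no further structure on ${\mathcal S}$ (beyond being embedded, to make sense of convergence in $M$) is needed. I would expect the proof to be essentially one short paragraph once this observation is made explicit.
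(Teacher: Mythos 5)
Your proof is correct and follows essentially the same route as the paper: choose a sequence in ${\mathcal S}$ converging to the target point, lift it via a local section of $t$ through the given arrow, apply $s$, and use $\Gamma$-stability of ${\mathcal S}$ plus continuity. The only cosmetic difference is that you make the symmetry between $s$ and $t$ (via inversion) explicit, which the paper leaves implicit.
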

\begin{proof}
For every $m \in \bar{{\mathcal S}} $, $\gamma \in \Gamma$ with $t(\gamma)=m$, there exists
a neighborhood $U$ of $ m$ and a local section $\sigma: U \to \Gamma $
 through $\gamma \in \Gamma$ of the target map $t: \Gamma \to M$.
Let $(u_n)_{n \in {\mathbb N}}$ be a sequence of points in ${\mathcal S} $ converging to $m $, then 
the sequence $v_n : n \mapsto  s \circ \sigma (u_n)$ is a sequence converging to $s (\gamma) $.
Since ${\mathcal S} $ is $\Gamma$-stable, the sequence $(v_n)_{n \in {\mathbb N}}$ takes values in ${\mathcal S}$,
and $s(\gamma)$ belongs to $ \bar{{\mathcal S}} $.
\end{proof}

Recall that a left (resp. right) $\Gamma $-module is a pair $(Z,\phi) $, with $Z$ a
manifold, and $\phi: Z \to M $ a map, endowed with a left (resp. right) action  of 
$ \Gamma \toto M$, i.e. a map from $Z \times_{\phi,M,s}\Gamma \to Z  $ (resp. $\Gamma \times_{t,M,\phi} Z \to Z $) satisfying some natural axioms, see e.g. \cite{McK}. 

\begin{example}\label{ex:trivialGammaModules}
A trivial but however important example of $\Gamma
$-module are the pairs $({\mathcal S}, {\mathfrak i}_{\mathcal S})$, with ${\mathcal S}
\subset M$ a $\Gamma$-stable submanifold and ${\mathfrak i}_{\mathcal S} : {\mathcal S} \hookrightarrow M$ the inclusion
map.  \end{example}

Notice that both resolutions of  $ \overline{\mathcal S} $ and $\Gamma$-modules
consist in pairs $(Z,\phi) $, with $Z $ a manifold, and $\phi: Z \to M$
a map. This leads to the following natural definition.

\begin{defn} \label{def:desingequiv}
Let $\Gamma \toto M $ be a Lie groupoid, and ${\mathcal S} $ a $\Gamma $-stable
embedded submanifold of $ M$. A {\em $\Gamma$-resolution of
  $\overline{\mathcal S}$} or   is a pair
$(Z,\phi) $, with $Z $ a manifold, and $\phi: Z \to M$
a map, which 
\begin{enumerate}
\item admits a structure of right $\Gamma $-module, and
\item is a resolution of $\overline{\mathcal S}$.
\end{enumerate}
We shall speak of an {\em equivariant resolution} instead of $\Gamma$-resolution when we do not to emphasize
on the name of the Lie groupoid.
\end{defn}
\begin{example}
Let $G$ be a Lie group acting on a manifold $M$.  Recall from \cite{McK} that $\Gamma = G \times M \toto M $ admits a natural   Lie groupoid
called the {\em transformation groupoid}. With respect to this Lie groupoid, 
$\Gamma$-stable submanifolds are submanifold stable under the action of 
$G$, and $\Gamma $-resolutions of $ \overline{\mathcal S}$ are resolutions which are equivariant
w.r.t.  the action of~$G$.
\end{example}

\begin{rmk}
By definition of a resolution of $\overline{\mathcal S} $, with ${\mathcal S}$
a $\Gamma$-stable submanifold, $\phi: \phi^{-1}({\mathcal S}) \to {\mathcal S} $ is a bijective map,
so that  the restriction of the $\Gamma $-action to $\phi^{-1}({\mathcal S})$
needs to be given for all $z \in \phi^{-1}({\mathcal S}) $ and
$\gamma \in \Gamma_{\phi(z)}$ by $ z \cdot \gamma = \phi^{-1} (s(\gamma)) .$
Since $\phi^{-1}({\mathcal S}) $ is a dense subset of $Z$, 
there is at most {\rm one} structure of $\Gamma$-module on a given resolution of $\overline{\mathcal S} $. 
Indeed, one could define $\Gamma$-resolutions of  $\overline{\mathcal S}$
as being those for which the natural action on $\Gamma$ on $\phi^{-1}({\mathcal S}) \subset Z$ 
extends to an action on $Z$ (in a smooth or holomorphic way). 
\end{rmk}
\subsection{Morita equivalence and equivariant-resolutions}
\label{Morita-resolution}

\vspace{0.5cm}
{\bf a) Morita equivalence, differential stacks.}
We briefly recall the definition of Morita equivalence of Lie groupoids, as constructed with the help of bimodules. 

\begin{defn}\label{def:Morita}
Let  $\Gamma \toto M $ and $\Gamma' \toto M' $ be two Lie groupoids. A $\Gamma-\Gamma'$-{\em bimodule} is a manifold $X $ endowed with two surjective submersions $ p: X \to M$  and $p': X \to M' $, so that
\begin{enumerate}
\item  $(Z,p) $ endows a structure of  left $\Gamma $-module; 
\item $(Z,p') $ endows a structure of right $\Gamma' $-module;
\item the right (resp. left) actions preserves all the fibers of $ p$ (resp. $p' $); 
\item The right and left actions commute, i.e. for all $\gamma \in \Gamma, z \in Z, \gamma' \in \Gamma' $
with $ t(\gamma) = p (z)  $ and $  p' (z) = s(\gamma') $:
 $$  (\gamma \cdot z) \cdot \gamma' = \gamma \cdot (z \cdot \gamma').$$
\end{enumerate}
A {\em Morita equivalence} is a $\Gamma $-$\Gamma'$-bimodule such that the right and left actions are both proper, free and transitive on the fibers of $p'$ and $p$ respectively.
\end{defn}
\begin{convention}
It shall be convenient to denote a Morita equivalence simply by a ${\mathcal X}$, although this notation does not make explicit the many structures it is equipped with. More precisely, from now, we shall denote by a curvy letter
 ${\mathcal X}$ (or ${\mathcal X}'$ / or ${\mathcal Y}$...), a Morita equivalence given by a set $X $ (or $X'$ / or $Y$...). In all the cases, $p$ and $p'$ shall stand for the two maps from $X$ (or $X' $ / or $Y $...) to the unit manifolds of the two Lie subgroupoids. Also, in all the cases, both left and right actions shall be simply denoted by a dot "$\cdot$". 
\end{convention}

Recall from \cite{LTX} that Morita equivalences can be composed, and that this composition is associative (up to isomorphism).
We briefly recall the construction. Let ${\mathcal X} $ and ${\mathcal X}' $ be Morita equivalences between Lie groupoids
$\Gamma_1 \toto M_1 $, $\Gamma_2 \toto M_2$ and  $\Gamma_3 \toto M_3$. Then  
the following data define a Morita equivalence ${\mathcal X}'' $ between $\Gamma_1 \toto M_1$ and  $\Gamma_3 \toto M_3$.
  \begin{enumerate}
  \item the manifold $X'' = \frac{X \times_{p',M_2,p} X'}{ \Gamma_2 } $,
  where  $\Gamma_2 \toto M_2 $ acts on $ X \times_{p',M_2,p} X' $ by  $\gamma_2 \cdot (x,x')=(x \cdot (\gamma_2)^{-1}, \gamma' \cdot x) $ for all $\gamma_2 \in \Gamma_2$, $x \in X $, $x' \in X'$ with $p'(x)=p(x') = t(\gamma') $.
   \item the maps  $p([(x,x')])=p (x)$ and $p'([(x,x')])=p' (x')$, where $[(x,x')]$ stands for the class of $(x,x')
   \in  X \times_{p_2',M_2,p_2} X'$   modulo the action of $\Gamma_2$,
  \item the right and left actions given by $\gamma_1 \cdot [(x,x')] \cdot \gamma_3 = [(\gamma_1 \cdot x,x' \cdot \gamma_3)]
   $, for all $\gamma_1 \in \Gamma_1 , \gamma_3 \in \Gamma_3$ with $ t(\gamma_1) = p(x), s(\gamma_3) = p'(x') $. 
  \end{enumerate}

\begin{convention}
From now, we shall identify isomorphic Morita equivalences, so that the composition of those becomes associative.
\end{convention}

A {\em differential stack}  is an equivalence class of Lie groupoids
modulo Morita equivalence. Given a Lie groupoid $\Gamma$, $[\Gamma]$
stands for the differential stacks to which it belongs.

\begin{rmk} One may argue that
Lie groupoids do not form a set, so that the terminology "equivalence relation" should be banned, 
and that the language of category would be more accurate here.
However, it would lead to unnecessary sophisticated complications, that we prefer to avoid, but
we shall in the sequel restate some of our results using categorical language as a remark.
\end{rmk}

 By a {\em representative} of the differential stack $[\Gamma]$, we mean  a pair formed by a Lie groupoid $\Gamma'$ together with a Morita equivalence ${\mathcal X} $ between $\Gamma$ and $\Gamma' $.

\begin{rmk} We warn the reader of a possible confusion: a representative of $[\Gamma]$
is not simply a Lie groupoid which happens to be Morita equivalent to
$\Gamma$, but a Lie groupoid  Morita equivalent to
$\Gamma$ {\em together with} a Morita equivalence relating it to $\Gamma$.
\end{rmk}

In the rest of this section, we review or study why Morita equivalence induces one-to-one correspondence of modules,
stable subsets, closure of stable submanifolds, equivariant resolutions.

\vspace{0.5cm}

{\bf b) Morita equivalence induces one-to-one correspondence of right-modules.}
It is well-known that a Morita equivalence ${\mathcal X}$ between
Lie groupoids $\Gamma $ and $\Gamma' $ induces a one-to-one
correspondence $\underline{\mathcal X} $ between right $\Gamma
$-modules and right $\Gamma' $-modules. This fact  is stated in the
present form in \cite{LTX} and is more of less implicit in \cite{Xu}, but we prefer to briefly recall the construction
that we shall use several times.
 For $(Z,\phi) $ a right $\Gamma $-module, $
\underline{\mathcal X} \big( (Z,\phi) \big) $ is the right $\Gamma' $-module $(Z',\phi') $ defined by
\begin{enumerate}
\item the manifold $ \frac{Z \times_{\phi, M, p} X }{ (z,x)\sim (z
  \gamma^{-1}, \gamma x ) }$ (which is a manifold, because, first, $Z
  \times_{\phi, M, p} X $ is itself a manifold since $p$ is a
  submersion, and, second, because the left action of $\Gamma $ on $X $ being a free
  and proper action,  so is its action on $Z \times_{\phi, M, p} X $, so that the quotient is a manifold), 
\item the map $[z,x] \mapsto p' (x) $, where $[z,x] $ is the class of $(z,x) \in Z \times_{\phi, M, p} X $
modulo the action of $\Gamma \toto M $, 
\item the right action defined by $  [z,x] \cdot \gamma' = [z, x \cdot \gamma'] $, for all $\gamma' \in s^{-1} (p'(x)) $,
and all $(z,x) \in Z \times_{\phi, M, p} X $.
\end{enumerate}

\begin{rmk}\label{rmk:functM}
To any morphism $\phi: Z_1 \to Z_2$ of right $\Gamma $-module is associated a morphism of the corresponding
right $\Gamma' $-module $\underline{\mathcal X}(\phi): \underline{\mathcal X}(Z_1)\to \underline{\mathcal X}(Z_2) $.
Said differently, the previous construction is functorial.
In particular, it would be more rigorous to use the language of categories and to claim that a Morita equivalence induces an equivalence of categories between  the categories of right $\Gamma $-modules
and right $\Gamma' $-modules.
\end{rmk}

\vspace{0.5cm}
{\bf c) Morita equivalence induces one-to-one correspondence of stable subsets.}
The Morita equivalence ${\mathcal X}$ also induces a
one-to-one correspondence between $\Gamma $-stable subsets of $M $ and
$\Gamma' $-stable subsets of $ M'$ by 
\begin{equation}\label{eq:subsets}S \mapsto p' \big( p^{-1}(S)
\big) . \end{equation}
The previous correspondence maps in particular orbits to orbits, and $\Gamma
$-stable submanifolds to $\Gamma'$-stable submanifolds. But $\Gamma $-stable submanifolds are also right $\Gamma$-modules
(as seen in example \ref{ex:trivialGammaModules}), and should be mapped by $ \underline{\mathcal X} $ to a right
$\Gamma'$-module. In both both constructions agree, in the sense that 
$$\underline{\mathcal X} \big( ({\mathcal S},{\mathfrak i}_{\mathcal S}) \big) = (
 {\mathcal S}', {\mathfrak i}_{{\mathcal S}'})  ,$$
where ${\mathcal S}' = p' \big( p^{-1}({\mathcal S}) \big) $.
In words: ``The restriction of $\underline{\mathcal X}$ to $\Gamma $-stable submanifolds is the correspondence  given by Equation (\ref{eq:subsets})''. This allows one to state the following convention:
  
\begin{convention}  From now, given a Morita equivalence $ {\mathcal X}$ between $\Gamma$
and $\Gamma'$, we denote by the same symbol $\underline{\mathcal X} $  the
one-to-one correspondences between $\Gamma$-stable and $\Gamma'$-stable subsets given by Equation (\ref{eq:subsets}),
and the one-to-one correspondences between right $\Gamma$-modules and right $\Gamma'$-module described in {\rm {\bf b)}} above. 
\end{convention}

\vspace{0.5cm}
{\bf The previous correspondence is compatible with closure.}
We would like the reader to understand the next lemma as follows: ``Morita equivalence
 preserves the shape of the closure of $\Gamma$-stable submanifolds''.

\begin{lem}\label{lem:comm_clos}
Let ${\mathcal X}$ be a Morita equivalence between $\Gamma \toto M$ and $\Gamma' \toto M'$, and
let ${\mathcal S} $ be a $\Gamma$-stable submanifold of $M$. Then
$$ \underline{\mathcal X} \big( \overline{\mathcal S} \big) =   \overline{   \underline{\mathcal X} ({\mathcal S} ) }.$$
\end{lem}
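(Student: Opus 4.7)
The statement to prove, once the notation is unwound via Equation \eqref{eq:subsets}, is the equality
$$ p'\bigl(p^{-1}(\overline{\mathcal S})\bigr) \;=\; \overline{p'\bigl(p^{-1}({\mathcal S})\bigr)} $$
of subsets of $M'$. My plan is to establish this double inclusion by exploiting, on the one hand, the fact that $p'$ is an open surjective submersion whose fibers are the left $\Gamma$-orbits in $X$, and on the other hand the existence of local sections of $p$.

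For the inclusion $\overline{p'(p^{-1}({\mathcal S}))} \subset p'(p^{-1}(\overline{\mathcal S}))$, the key point is that the right-hand side is already closed. Indeed, since $p$ is continuous, $p^{-1}(\overline{\mathcal S})$ is closed in $X$; since $\overline{\mathcal S}$ is $\Gamma$-stable by Lemma \ref{lem:leaf}, the set $p^{-1}(\overline{\mathcal S})$ is stable under the left $\Gamma$-action on $X$. Because that action is transitive on the fibers of $p'$, the complement $X \setminus p^{-1}(\overline{\mathcal S})$ is likewise $\Gamma$-saturated, so
$$ M' \setminus p'\bigl(p^{-1}(\overline{\mathcal S})\bigr) \;=\; p'\bigl(X \setminus p^{-1}(\overline{\mathcal S})\bigr). $$
As $p'$ is an open map (being a submersion) and $X \setminus p^{-1}(\overline{\mathcal S})$ is open, this complement is open in $M'$. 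Hence $p'(p^{-1}(\overline{\mathcal S}))$ is closed, and since it obviously contains $p'(p^{-1}({\mathcal S}))$, it contains its closure.

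For the reverse inclusion $p'(p^{-1}(\overline{\mathcal S})) \subset \overline{p'(p^{-1}({\mathcal S}))}$, I proceed by sequences. Pick $m' \in p'(p^{-1}(\overline{\mathcal S}))$ and choose $x \in X$ with $p'(x) = m'$ and $p(x) \in \overline{\mathcal S}$. Select a sequence $(s_n)_{n \in \mathbb N}$ in ${\mathcal S}$ converging to $p(x)$. Because $p$ is a submersion, it admits a local smooth section $\sigma$ defined on a neighborhood of $p(x)$ satisfying $\sigma(p(x)) = x$; then for $n$ large enough $x_n := \sigma(s_n) \in p^{-1}({\mathcal S})$ is well-defined, converges to $x$, and $p'(x_n) \to p'(x) = m'$. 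Since each $p'(x_n)$ lies in $p'(p^{-1}({\mathcal S}))$, this exhibits $m'$ as a limit of points of $p'(p^{-1}({\mathcal S}))$, concluding the proof.

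The main conceptual step is the first inclusion, where one must use that a Morita equivalence turns $p'$ into a quotient map for the $\Gamma$-action (so that $\Gamma$-saturated closed sets project to closed sets); the second inclusion is comparatively routine, requiring only the submersion property of $p$ to lift convergent sequences. I expect no delicate calculation beyond these standard manoeuvres.
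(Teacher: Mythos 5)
Your proof is correct. The inclusion $p'\bigl(p^{-1}(\overline{\mathcal S})\bigr) \subset \overline{p'\bigl(p^{-1}({\mathcal S})\bigr)}$ is established exactly as in the paper: a convergent sequence in ${\mathcal S}$ is lifted through a local section of $p$ and pushed down by $p'$. For the reverse inclusion, however, you take a genuinely different route. The paper again argues pointwise with sequences, this time using a local section of $p'$ through an arbitrary point of the fiber over a limit point $m'$, and (implicitly) the fact that $p^{-1}({\mathcal S})$ is saturated for the left $\Gamma$-action so that the lifted sequence lands in $p^{-1}({\mathcal S})$. You instead observe that $p'\bigl(p^{-1}(\overline{\mathcal S})\bigr)$ is already closed: $p^{-1}(\overline{\mathcal S})$ is closed and $\Gamma$-saturated (via Lemma \ref{lem:leaf}), the fibers of $p'$ are precisely the left $\Gamma$-orbits by the transitivity and fiber-preservation clauses in Definition \ref{def:Morita}, and $p'$ is an open surjection, so saturated closed sets have closed images. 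This is a cleaner, more structural argument that isolates the real mechanism (a Morita equivalence makes $p'$ a quotient map for the $\Gamma$-action); the paper's sequence argument is more elementary but uses essentially the same saturation fact without naming it. Both are valid, and your version has the small advantage of making explicit where Lemma \ref{lem:leaf} enters.
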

\begin{proof}
We prefer to give the proof in great detail, although it is just an undergraduate exercise in topology.
For every $m' \in  \underline{\mathcal X} \big( \overline{\mathcal S} \big)$, there exists 
an element $ x \in X$ such that $p(x) \in  \overline{\mathcal S} $ and $p'(x)=m'$.
Moreover, there exists a sequence $ (u_n)_{n \in {\mathbb N}}$ of elements in $ {\mathcal S}$ converging to $m \in M  $. Since $p: X \to M $ is a submersion, there exist a neighborhood $U$ of $m$ and 
a local section $\sigma: U \to X $ of $p $ through $ x \in X $.
The sequence $n \mapsto \sigma(u_n)$ takes its values in $p^{-1}({\mathcal S}) $
and converges to $x$. Hence the sequence $ n \mapsto p'(\sigma(u_n))$ 
takes its values in $p'(p^{-1}({\mathcal S}))= \underline{\mathcal X} ({\mathcal S} ) $
and converges to $m'$, so that we obtain the inclusion
 \begin{equation}\label{eq:incl1} \underline{\mathcal X} \big( \overline{\mathcal S} \big) \subset   \overline{   \underline{\mathcal X} ({\mathcal S} ) } . \end{equation}
 Conversely, for every $m' \in \overline{   \underline{\mathcal X} ({\mathcal S} ) }$,
and every $x \in X$ with $p'(x) = m' $, there exists a neighborhood $U'$ of $m'$ in $M$ 
and local section $\sigma': U' \to X$ of $p'$ through $x $. Let $(u_n')_{n \in {\mathbb N}}$  
in ${\mathcal X}({\mathcal S})$ converging to $m' $. The sequence $n \mapsto p ( \sigma' (u_n')) $ belongs to 
${\mathcal S} $, and converges to $p(x) $. Hence, $p(x) \in  \overline{\mathcal S}$, and
$m'= p'(x) $ in an element in $\underline{\mathcal X} \big( \overline{\mathcal S} \big)=p'\big(p^{-1}(\overline{\mathcal S})\big)$. 
In conclusion, we have the inclusion
\begin{equation}\label{eq:incl2}   \overline{\underline{\mathcal X} ({\mathcal S}) }  \subset   \underline{\mathcal X} \big( \overline{\mathcal S} \big) .\end{equation}
The lemma follows from  (\ref{eq:incl1})-(\ref{eq:incl2}).
\end{proof}

\begin{rmk}\label{rmk:incl}
In fact, we have proved that a $ \Gamma$-orbit ${\mathcal T}$ belongs to the closure of a 
$\Gamma $-orbit ${\mathcal S}$ if and only if $\underline{\mathcal X}
({\mathcal T}) $ belongs to the closure of $ \underline{\mathcal X} ({\mathcal S} ) $.
\end{rmk}

\vspace{0.5cm}
{\bf d) Morita equivalence induces one-to-one correspondence of equivariant resolutions.}
We would like the reader to understand the next proposition as follows: "The
notion of equivariant resolution goes down to the level of differential stacks".
\begin{prop}
Let ${\mathcal X} $ be a Morita equivalence between Lie groupoids $\Gamma \toto M $ and $\Gamma' \toto M' $.
Let  ${\mathcal S} $ be a $\Gamma$-stable submanifold of $M$, and
 ${\mathcal S}'= {\underline{\mathcal X}}({\mathcal S})  $ be the corresponding $\Gamma'$-stable submanifold of $M'$.

Then $ {\underline{\mathcal X}}$ restricts to a one-to-one correspondence between $\Gamma$-resolutions 
(resp. surjective $\Gamma $-resolutions / proper $\Gamma $-resolutions) of $\overline{\mathcal S}$ and
$\Gamma' $-resolutions (resp. surjective $\Gamma'$-resolutions /
proper $\Gamma'$-resolutions) of $\overline{{\mathcal S}'} $. 
\end{prop}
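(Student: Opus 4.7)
The plan is to derive the Proposition from the module-level correspondence $\underline{\mathcal X}$ already recalled in {\bf b)}, together with Lemma~\ref{lem:comm_clos}. Start with a $\Gamma$-resolution $(Z,\phi)$ of $\overline{\mathcal S}$ and set $(Z',\phi'):=\underline{\mathcal X}\bigl((Z,\phi)\bigr)$, so that $Z'=(Z\times_{\phi,M,p}X)/\Gamma$ with $\phi'([z,x])=p'(x)$. The crux is the identification
\[
(\phi')^{-1}({\mathcal S}')=\underline{\mathcal X}\bigl(\phi^{-1}({\mathcal S})\bigr),
\]
which I would establish as follows: the inclusion $\supseteq$ is immediate since $\phi(z)\in{\mathcal S}$ forces $p(x)=\phi(z)\in{\mathcal S}$ and hence $p'(x)\in{\mathcal S}'$; for the reverse inclusion, if $p'(x)\in p'(p^{-1}({\mathcal S}))$ and $x_0\in p^{-1}({\mathcal S})$ lies in the same $p'$-fibre as $x$, then the unique $\gamma\in\Gamma$ with $\gamma\cdot x=x_0$ (free transitivity of the left action on $p'$-fibres) produces the representative $(z\cdot\gamma^{-1},x_0)$ of $[z,x]$, whose first coordinate is in $\phi^{-1}({\mathcal S})$.

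Granting this identification, three of the four claims are immediate. \emph{Density}: $\phi^{-1}({\mathcal S})\times_MX$ is dense in $Z\times_MX$ because local sections of the submersion $p$ let one approximate any point by pairs of the form $(z,\sigma\circ\phi(z))$ with $z\in\phi^{-1}({\mathcal S})$, and the quotient map $Z\times_MX\to Z'$ is open, so $(\phi')^{-1}({\mathcal S}')$ is dense in $Z'$. \emph{Diffeomorphism onto ${\mathcal S}'$}: by the functoriality of $\underline{\mathcal X}$ (Remark~\ref{rmk:functM}), the diffeomorphism $\phi|_{\phi^{-1}({\mathcal S})}\colon\phi^{-1}({\mathcal S})\to{\mathcal S}$ is carried to the diffeomorphism $\phi'|_{(\phi')^{-1}({\mathcal S}')}\colon(\phi')^{-1}({\mathcal S}')\to{\mathcal S}'$. \emph{Surjectivity}: $\phi'(Z')=p'(p^{-1}(\phi(Z)))=\underline{\mathcal X}(\phi(Z))$, so Lemma~\ref{lem:comm_clos} converts $\phi(Z)=\overline{\mathcal S}$ into $\phi'(Z')=\overline{{\mathcal S}'}$.

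The main obstacle I expect is \emph{properness}, which does not follow from sheer functoriality. I would handle it by a sequence argument: given $m'_n\to m'_\infty$ in $M'$ and $[z_n,x_n]\in(\phi')^{-1}(\{m'_n\})$, pick a local section $\sigma'\colon V'\to X$ of $p'$ on a neighborhood of $m'_\infty$, and for $n$ large let $x'_n:=\sigma'(m'_n)$, which converges. The unique $\gamma_n\in\Gamma$ with $\gamma_n\cdot x_n=x'_n$ gives the equivalent representative $(\tilde z_n,x'_n):=(z_n\cdot\gamma_n^{-1},x'_n)$, and $\phi(\tilde z_n)=p(x'_n)\to p(\sigma'(m'_\infty))$ in $M$. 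Properness of $\phi$ extracts a convergent subsequence $\tilde z_{n_k}\to\tilde z_\infty$, whence $[z_{n_k},x_{n_k}]\to[\tilde z_\infty,\sigma'(m'_\infty)]$ in $Z'$. Finally, the one-to-one character of the correspondence follows by applying the same construction to the inverse Morita equivalence; the composition recovers the identity on resolutions because it already does so on right modules.
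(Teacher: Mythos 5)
Your proof is correct and its skeleton matches the paper's: both reduce the first two assertions to the functoriality of $\underline{\mathcal X}$ on right modules together with Lemma~\ref{lem:comm_clos} for surjectivity, and both treat properness as the one point needing a genuine argument. The differences are worth noting. First, you make explicit two facts the paper leaves implicit: the identification $(\phi')^{-1}({\mathcal S}')=\underline{\mathcal X}\bigl(\phi^{-1}({\mathcal S})\bigr)$ (via free transitivity of the left action on $p'$-fibres) and the density of $(\phi')^{-1}({\mathcal S}')$ in $Z'$ (via local sections of $p$ and openness of the quotient map); the paper's diagram-chasing argument silently uses both, so your version is the more complete one. Second, for properness the paper argues directly with compact sets: it lifts a compact $K\subset\overline{{\mathcal S}'}$ to a compact $\hat K\subset X$ with $p'(\hat K)=K$ and exhibits $(\phi')^{-1}(K)$ as the continuous image of the compact set $Z\times_{\phi,M,p}\hat K$, whose compactness follows from properness of $\phi$ applied to $p(\hat K)$. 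You instead use the sequential characterization of properness, sliding representatives along $p'$-fibres via a local section $\sigma'$ and the $\Gamma$-action so that properness of $\phi$ yields a convergent subsequence. Both are sound; the paper's version is slightly cleaner in that it avoids invoking the equivalence between properness and sequential compactness of preimages (valid here because all spaces are metrizable manifolds, but an extra hypothesis you are tacitly using), while yours has the advantage of producing the limit point explicitly. Your closing remark that injectivity/surjectivity of the restricted correspondence follows from applying the construction to the inverse Morita equivalence is exactly the paper's (unstated) reason as well.
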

\begin{proof}
Let $(Z,\phi) $ be a resolution of $\overline{\mathcal S} $ and define ${\underline{\mathcal X}}(Z)=Z',{\underline{\mathcal X}}(\phi)=\phi'$.
Applying the functor $ {\underline{\mathcal X}}$ (see remark \ref{rmk:functM})  to the commutative diagram:
 $$  \xymatrix{ \phi^{-1}({\mathcal S})      \ar@{^{(}->}[r]^{i} \ar[d]^{\simeq }   & Z \ar[dl]^{\phi}   \\ {\mathcal S}& }  $$
(where the vertical arrow is an isomorphism) and using the functorial properties  of $\underline{\mathcal X} $, which maps ${\mathcal S} $ to $ {\mathcal S}' $, inclusion of modules to inclusion of modules, and isomorphisms of modules to isomorphisms of modules, one obtains the commutative diagram:
 $$  \xymatrix{ (\phi')^{-1}({\mathcal S}')  \ar@{^{(}->}[r]^{i} \ar[d]^{\simeq }   & Z' \ar[dl]^{\phi'}   \\ {\mathcal S}'& }  $$
 (where the vertical arrow is an isomorphism).
 In words, the restriction of $\phi' $ to $(\phi')^{-1}(\overline{{\mathcal S}'}) $ is an invertible map,
 hence ${\underline{\mathcal X}} $ maps resolutions of $\overline{{\mathcal S}} $ to resolutions of $\overline{{\mathcal S}'} $. 
 
 Using the functorial properties of $ {\underline{\mathcal X}}$ (see remark \ref{rmk:functM}) once more, we obtain:
   $$ \phi'(Z')=  {\underline{\mathcal X}} (\phi') \big(  {\underline{\mathcal X}}(Z) \big)= {\underline{\mathcal X}}(\phi(Z))  .$$
 Hence, if $(Z,\phi) $ is surjective, then:
  $$ \phi'(Z') =   {\underline{\mathcal X}}(\overline{\mathcal S}) = \overline{{\underline{\mathcal X}}({\mathcal S})} = 
   \overline{{\mathcal S}'} ,$$
 where Lemma \ref{lem:comm_clos} has been used to go from the second to the third equality.  
 In conclusion, ${\underline{\mathcal X}} $ maps surjective resolutions of $\overline{{\mathcal S}} $ to surjective resolutions 
 of~$\overline{{\mathcal S}'} $.  
 
Assume now that the resolution $(X,\phi)$ is proper, and let $K
\subset \overline{{\mathcal S}'} $ be a compact subset. There exists a
compact subset $\hat{K}$ in $X$ with $p'(\hat{K})=K $ (this is due to
the fact that $p'$ is a surjective submersion and therefore admits
local sections). By construction,b $(\phi^{'})^{-1} (K)$ is the image of $Z \times_{\phi, M, p} \hat{K} $ through the natural projection 
 $$   Z \times_{\phi, M, p} X  \mapsto \frac{Z \times_{\phi, M, p} X }{ (z,x)\sim (z \gamma^{-1}, \gamma x ) } = Z'.$$
Since $ \hat{K} $  is compact, so is $p( \hat{K})$, hence so is
$\phi^{-1}( p( \hat{K}) ) $ by properness of $\phi$. The compactness of $Z \times_{\phi, M, p} \hat{K} $ follows,
and implies in turn the compactness of its image $(\phi^{'})^{-1} (K)$.  
\end{proof}

\section{Substacks of differential stacks}
\label{Substacks}

A {\em Lie subgroupoid} is a pair $(\Gamma \toto M, R \toto L)$, with $\Gamma \toto M $
a Lie groupoid, $R $ a submanifold of $\Gamma $ and $L  $ a
submanifold of $M$ stable under the structural maps  (unit, source, target, multiplication
and inverse) of $\Gamma \toto M $.

\begin{defn}
A Lie subgroupoid $R \toto L $ of a Lie groupoid $\Gamma \toto M $ is said to be \emph{closed}
if $R $ is a closed subset in $\Gamma_L^L $. 
\end{defn}

\begin{rmk}
When $L$ is itself a closed submanifold of $M$, this condition simply amounts to request that $R$ is a closed subset of $\Gamma $. 
\end{rmk}


Let  ${\mathcal S} \subset M $ be a $\Gamma$-stable submanifold of $M$. 
A submanifold $ L $ of $ M $ is said to  {\em intersect transversally the $\Gamma $-orbits contained in ${\mathcal S}$} if for all $m \in L \cap {\mathcal S} $
 $$ T_m {\mathcal S} = T_m F_m + T_m L $$
where $F_m $ is the $\Gamma$-orbit through $ m \in M$.
 The sum is {\em not} assumed to be a direct sum in general. 

\begin{rmk}
The transversality condition, in terms of the Lie algebroid $A \to M $,
with anchor $\rho $, of the Lie groupoid $\Gamma  \toto M$, means that, for all $m \in L \cap {\mathcal S} $, we  have:
 $$ T_m{\mathcal S} = \rho_m (A_m) + T_mL.$$ 
 \end{rmk}

\begin{rmk}\label{rmk:trans}
More important is the following remark.
The transversality condition means that, for all $m \in L \cap
{\mathcal S} $, we  have that
 $ T_m{\mathcal S} $ is equal to the image $T_mt (T_m\Gamma_L)$ of $T_m\Gamma_L \subset T_m\Gamma$ 
 through the differential $T_mt$ of the target map $t: \Gamma \to M$ at $m\in M$.
 Said differently, for every $u \in T_m{\mathcal S}$, there exist a path $\epsilon \mapsto \gamma(\epsilon)$
 in $\Gamma_L$, starting at $m$, s.t.:
  $$ u= \left. \frac{{\rm d}}{\rm d\epsilon} \right|_{\epsilon=0} t(\gamma(\epsilon))   $$
 The same could be said of the source map, upon replacing $\Gamma_L$ by $\Gamma^L $.
 \end{rmk}

\begin{defn}\label{def:forsubgroup}
Let $\Gamma \toto M $ be a Lie groupoid and ${\mathcal S} \subset M $ be a $\Gamma$-stable submanifold. A Lie subgroupoid $R \toto L $ of $\Gamma \toto M $ is said to be {\em a Lie subgroupoid in ${\mathcal S} $} if 
\begin{enumerate}
\item[(a)] $L \cap {\mathcal S} $ is a dense subset of $L$,
\item[(b)] $L$ intersects transversally the $\Gamma $-orbits contained in ${\mathcal S}$,
\item[(c)] and $L$ has a non-empty intersection with all the $\Gamma $-orbits contained in~${\mathcal S}$. 
\end{enumerate}
 A Lie subgroupoid $R \toto L $ in ${\mathcal S}$ is said to be
\begin{enumerate}
\item {\em surjective in $\overline{\mathcal S}$} when
$L$ has a non-empty intersection with all the $\Gamma $-orbits contained in $\overline{\mathcal S}$,
\item {\em full in ${\mathcal S} $} when
 $$   R_{L \cap {\mathcal S}}^{L \cap {\mathcal S}} \, = 
 \, \Gamma_{L \cap {\mathcal S}}^{L \cap {\mathcal S}}  , $$
 (In other words: "an arrow in $ \Gamma$ connecting two points in $L \cap {\mathcal S}$ belongs to $R$")
\item {\em a proper subgroupoid} when for all compact subset $K
  \subset \overline{\mathcal S}$, the quotient topological space $R \backslash
  \Gamma^K_L $ is compact.
\end{enumerate}
\end{defn}

\begin{rmk}
If ${\mathcal S} $ is simply a $\Gamma$-orbit, then requiring $R \toto L $ to be in ${\mathcal S} $ just amounts to require that $L \cap {\mathcal S}$ is a dense subset of $L$, for the transversality condition is automatically satisfied.  A Lie subgroupoid $R \toto L $ integrating an algebroid crossing, as defined in \cite{desing}, is automatically surjective in $\overline{\mathcal S} $.
\end{rmk}

We now define Morita equivalence of subgroupoids. 

\begin{defn}
Let $R \toto L $ be a Lie subgroupoid of $\Gamma \toto M $, and  $R' \toto L' $ be a Lie subgroupoid of
$\Gamma' \toto M' $. A Morita equivalence between these subgroupoids is given by a pair $({\mathcal X}, Y) $, where:
\begin{enumerate}
\item ${\mathcal X} $ is a Morita equivalence between the Lie groupoids $\Gamma \toto M $ and $\Gamma' \toto M' $,
\item ${\mathcal Y} $ is a Morita equivalence between the Lie groupoids $R      \toto L $ and $R'      \toto L' $,
\item an injective immersion ${\mathfrak i}:Y \hookrightarrow X $ such that the following diagram  commutes:
$$ \xymatrix{ & X \ar[dr]_{p'} \ar[dl]^{p} &   \\ M&     & M' \\   &  Y \ar[ur]^{p'} \ar@{^{(}->}[uu]^{\mathfrak i} \ar[ul]_{p} &   } $$
and which is compatible with the $R$ and $R'$-actions, i.e.
  $$ {\mathfrak i}(r \cdot y \cdot r') = r \cdot {\mathfrak i}(y) \cdot r' $$
for all $r \in R, y \in Y, r' \in R'$ s.t. $t(r)=p(y) $ and $p'(y) = s(r')$.  
\end{enumerate}
\end{defn}

\begin{rmk}
We warn the reader that, given a Morita equivalence ${\mathcal X} $ between $\Gamma \toto M$ and $\Gamma' \toto M' $, and given a Lie subgroupoid $R' \toto L' $ of $\Gamma' \toto L'$, there may not exist a subgroupoid $R \toto L $ of $\Gamma \toto L $ Morita equivalent to the first one. 
\end{rmk}

Morita equivalence of Lie subgroupoids can be composed.

\begin{prop}
Assume that we are given, for $i=1,2,3$, a Lie subgroupoid $R_i \toto L_i$ in $\Gamma_i \toto M_i) $. 
Let
\begin{enumerate} 
\item  $({\mathcal X} ,{\mathcal Y},{\mathfrak i})$   be a Morita equivalence  of Lie subgroupoids between $(R_1 \toto L_1,\Gamma_1 \toto M_1) $ and  $(R_2 \toto L_2,\Gamma_2 \toto M_2) $
\item  $({\mathcal X}',{\mathcal Y}',{\mathfrak i}')$ be a Morita equivalence of Lie subgroupoids  between $(R_2 \toto L_2,\Gamma_2 \toto M_2) $ and $(R_3 \toto L_3,\Gamma_3 \toto M_3) $.
\end{enumerate}
Then $ ({\mathcal X}'',{\mathcal Y}'',{\mathfrak i}'') $ is a Morita equivalence of Lie subgroupoids between $(R_1 \toto L_1,\Gamma_1 \toto M_1) $ and $(R_3 \toto L_3,\Gamma_3 \toto M_3) $ where:
\begin{enumerate}
\item  ${\mathcal X}'' $ is the composition of the Morita equivalences $ {\mathcal X}$
and ${\mathcal X}' $ as defined in section \ref{Morita-resolution}(a).
\item  ${\mathcal Y}'' $ is the composition of the Morita equivalences $ {\mathcal Y}$
and ${\mathcal Y}' $ as defined in section \ref{Morita-resolution}(a).
\item The map ${\mathfrak i}'': Y'' \hookrightarrow M'' $ is given by:
 $$ {\mathfrak i}''\big([(y,y')]\big)= [{\mathfrak i}(y),{\mathfrak i}'(y')],$$
for all $y \in Y,y' \in Y'$ s.t. $p'(y)=p(y')$, where $[(\cdot,\cdot)]$ stands for the class of an element in $Y \times_{p',L_2,p} Y'  $ or $ X \times_{p',M_2,p} X' $ modulo the action of $R_2 $ or $\Gamma_2$ respectively. 
\end{enumerate}
\end{prop}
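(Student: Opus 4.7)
The core of the proof is routine once the well-definedness of $\mathfrak{i}''$ is established, so I would proceed as follows. First I would check that the formula $\mathfrak{i}''\bigl([(y,y')]\bigr)=[(\mathfrak{i}(y),\mathfrak{i}'(y'))]$ descends to the quotient by $R_2$. Pick $r_2\in R_2$ with $t(r_2)=p'(y)$ and $s(r_2)=p(y')$; then the equivalence $(y,y')\sim (y\cdot r_2^{-1},r_2\cdot y')$ on $Y\times_{p',L_2,p}Y'$ is sent by $(\mathfrak{i},\mathfrak{i}')$ to $(\mathfrak{i}(y)\cdot r_2^{-1},r_2\cdot \mathfrak{i}'(y'))$ using the $R$-equivariance of $\mathfrak{i}$ and $\mathfrak{i}'$. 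Because $R_2\subset\Gamma_2$, this pair is equivalent to $(\mathfrak{i}(y),\mathfrak{i}'(y'))$ under the $\Gamma_2$-action defining $X''$, so $\mathfrak{i}''$ is well defined on the quotient.

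Next I would verify the diagram. This is immediate on representatives: $p([(\mathfrak{i}(y),\mathfrak{i}'(y'))])=p(\mathfrak{i}(y))=p(y)=p([(y,y')])$ by the commutativity of the two given diagrams, and similarly on the $p'$-side. The compatibility of $\mathfrak{i}''$ with the $R_1$- and $R_3$-actions also reduces to a one-line check on representatives: $\mathfrak{i}''\bigl(r_1\cdot [(y,y')]\cdot r_3\bigr)=[(\mathfrak{i}(r_1\cdot y),\mathfrak{i}'(y'\cdot r_3))]=[(r_1\cdot\mathfrak{i}(y),\mathfrak{i}'(y')\cdot r_3)]=r_1\cdot\mathfrak{i}''([(y,y')])\cdot r_3$, again using the equivariance of $\mathfrak{i}$ and $\mathfrak{i}'$.

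The one nontrivial step, and the main obstacle, is showing that $\mathfrak{i}''$ is an injective immersion. For injectivity, suppose $[(\mathfrak{i}(y_1),\mathfrak{i}'(y_1'))]=[(\mathfrak{i}(y_2),\mathfrak{i}'(y_2'))]$ in $X''$; then there exists $\gamma_2\in\Gamma_2$ with $\mathfrak{i}(y_2)=\mathfrak{i}(y_1)\cdot\gamma_2^{-1}$ and $\mathfrak{i}'(y_2')=\gamma_2\cdot\mathfrak{i}'(y_1')$. Both $\mathfrak{i}(y_1)\cdot\gamma_2^{-1}$ and $\mathfrak{i}(y_2)$ lie in the image of $Y$, and by freeness and transitivity of the $\Gamma_2$-action on the fibres of $p$ in $X$, combined with the fact that $\mathcal{Y}$ is a Morita equivalence between $R_2\toto L_2$ and its counterpart, one concludes $\gamma_2\in R_2$; therefore $(y_1,y_1')\sim(y_2,y_2')$ in $Y''$. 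For the immersion property, I would work locally: the map $(\mathfrak{i},\mathfrak{i}')\colon Y\times_{p',L_2,p}Y'\to X\times_{p',M_2,p}X'$ is an injective immersion (because $\mathfrak{i}$ and $\mathfrak{i}'$ are), and it descends to $\mathfrak{i}''$ along the principal $R_2$- and $\Gamma_2$-bundle projections. Using a local section of the quotient $X\times_{p',M_2,p}X'\to X''$ near a given point in the image of $\mathfrak{i}''$, one can identify a neighbourhood of $[(y,y')]$ in $Y''$ with a local transversal for $R_2$ inside $Y\times_{p',L_2,p}Y'$, which maps by $(\mathfrak{i},\mathfrak{i}')$ to a submanifold transverse to the $\Gamma_2$-orbits in $X\times_{p',M_2,p}X'$; quotienting yields the desired injective immersion. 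Finally, the fact that the resulting $\mathfrak{i}''$ realises a Morita equivalence of subgroupoids is just the assertion that $\mathcal{Y}''$ is a Morita equivalence, which was noted in section \ref{Morita-resolution}(a), together with the compatibilities checked above.
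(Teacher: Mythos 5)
Your proposal is correct and follows essentially the same route as the paper: the paper likewise dismisses well-definedness and equivariance as routine and concentrates on showing $\mathfrak{i}''$ is an injective immersion, deducing $\gamma_2\in R_2$ from the transitivity of the $R_2$-action on the fibres of $p|_Y$ together with the freeness of the $\Gamma_2$-action on $X$, and then reading off the immersion property from the fact that the $\Gamma_2$-orbit of a point of $Y\times_{p',L_2,p}Y'$ meets that submanifold exactly in its $R_2$-orbit. Your transversal/local-section phrasing of the immersion step is a mild elaboration of the same idea, at a comparable level of rigour to the paper's.
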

\begin{proof}
The only point that has to be checked is that ${\mathfrak i}''$ is an injective immersion. First, 
 $$ {\mathfrak i}''([(y_1,y_1')])= {\mathfrak i}''([(y_2,y_2')]) $$
 implies that there exists $\gamma ' \in \Gamma'$ with $y_1 \cdot (\gamma')^{-1} = y_2   $ and $\gamma' \cdot y_1'= y_2'$. Since the action $R_2$ is transitive on the fibers of $p:Y \to M_1$, it follows from the first of these identities that $ \gamma' \in R_2 $, which is tantamount to $[(y_1,y_1')]=[(y_2,y_2')] $. 
Said differently, when one considers $Y \times_{p',L_2,p} Y' $ as a submanifold of $X \times_{p',M_2,p} X'$,  
we obtain that for every $(y,y') \in Y \times_{p',L_2,p} Y'$, the intersection of the $\Gamma_2$-orbit through
 $({\mathfrak i} (y),{\mathfrak i}'(y')) \in Y \times_{p',L_2,p} Y'  \subset X \times_{p',M_2,p} X'$ with $Y \times_{p',L_2,p} Y'$ is the $R_2$-orbit of $(y,y')$. 
 This last assertion proves that $i''$ is injective, but also
  that ${\mathfrak i}'' $ is an immersion.
\end{proof}

There is a natural notion of isomorphism of Morita equivalence of subgroupoids. Again, the composition defined by the last proposition is associative up to isomorphism, which allows the following convention:

\begin{convention} \label{conv:morita-subgroupoi}
From now, we identify isomorphic Morita equivalences of Lie subgroupoids, so that composition of Morita equivalences of Lie subgroupoids
is associative.

Also, given a Morita equivalence $ ({\mathcal X},{\mathcal Y},{\mathfrak i})$, we shall most of the time
consider $Y$ as a submanifold of $X$ (and therefore identify $y \in Y$ with $ {\mathfrak i}(y) \in X$). 
\end{convention}

 A {\em differential substack}  is an equivalence class of Lie subgroupoids modulo Morita equivalence. 

\begin{rmk}
Again, Lie subgroupoids do not form a set, so that it is a bit abusive a speak of "equivalence class".
\end{rmk} 
 
 We will in general consider differential substacks of a {\em given} stack $[\Gamma]$. We do it as follows. Given a Lie groupoid $\Gamma $, a {\em representative of a substack} of $[\Gamma]$ is a triple $(\Gamma', R',{\mathcal X})$ where $ {\mathcal X}$ is a Morita equivalence between $\Gamma$ and $\Gamma'$, and $R' \toto L' $ is a subgroupoid of $\Gamma' \toto M' $.    We say that two representatives $(\Gamma_1', R_1', {\mathcal X}_1)$ and $(\Gamma_2', R_2',{\mathcal X}_2)$ are Morita equivalent  if there exists a Morita equivalence $({\mathcal Z},{\mathcal Y},{\mathfrak i}'') $ between the subgroupoids $R_1'  $   and $R_2' $ such that 
$${\mathcal Z} = {\mathcal X}_2 \circ {\mathcal X}_1^{-1}.$$
By construction,  representatives of a substack of $[\Gamma]$ modulo
Morita equivalence (of representatives) form differential substacks. 

\begin{prop}\label{prop:Moritasubgrou}
Let $({\mathcal X},{\mathcal Y}, {\mathfrak i})$ be a Morita equivalence of subgroupoid between a subgroupoid $R \toto L $ of $\Gamma \toto M $ and  $R' \toto L' $ of $\Gamma' \toto M'$. Let ${\mathcal S} $ be a $\Gamma$-stable submanifold of $M$, and
$ {\mathcal S}' = \underline{\mathcal X }({\mathcal S})$ the corresponding $\Gamma'$-stable submanifold in $M'$.

The subgroupoid $R \toto L $ is in $ \overline{\mathcal S} $/ surjective in $\overline{\mathcal S}$/  full in $ {\mathcal S} $/ proper if
and only if  $R' \toto L' $ is in $ \overline{\mathcal S}' $/ surjective in $\overline{\mathcal S}'$/  full in $ {\mathcal S} '$/ proper.
\end{prop}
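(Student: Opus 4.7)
The plan is to treat the four properties separately, in each case using the Morita equivalence of subgroupoids $({\mathcal X}, {\mathcal Y}, {\mathfrak i})$ and, most crucially, the inclusion $Y \hookrightarrow X$ together with the fact that the moment maps $p, p'$ of ${\mathcal X}$ restrict to those of ${\mathcal Y}$. By symmetry of Morita equivalence only the direction "$R \Rightarrow R'$" has to be shown for each property.

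For density, non-empty intersection and surjectivity, I would first check that $L \cap {\mathcal S}$ is $R$-stable and then apply Lemma \ref{lem:comm_clos} to the Morita equivalence ${\mathcal Y}$ and the subset $L \cap {\mathcal S}$ of $L$ (the proof of that lemma extends verbatim to arbitrary subsets, not only submanifolds). A direct computation using $Y \subset X$, $\Gamma$-stability of ${\mathcal S}$, and transitivity of the $\Gamma$-action on the fibres of $p'$ identifies $\underline{\mathcal Y}(L \cap {\mathcal S})$ with $L' \cap {\mathcal S}'$; density then transfers. For non-empty intersection with each $\Gamma$-orbit ${\mathcal O}$ contained in ${\mathcal S}$, I would lift any $m \in L \cap {\mathcal O}$ to $y \in Y$ with $p(y) = m$ and use $p'(y) \in L' \cap \underline{\mathcal X}({\mathcal O})$. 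Surjectivity in $\overline{\mathcal S}$ follows the same way, now applied to orbits in $\overline{\mathcal S}$, whose $\underline{\mathcal X}$-images exhaust the orbits of $\overline{{\mathcal S}'}$ by Lemma \ref{lem:comm_clos} and Remark \ref{rmk:incl}.

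For transversality and fullness, I would fix $m' \in L' \cap {\mathcal S}'$, pick $y_0 \in Y$ with $p'(y_0) = m'$, and set $m := p(y_0) \in L \cap {\mathcal S}$. Writing $\tilde{\mathcal S} := p^{-1}({\mathcal S})$, one checks that $\tilde{\mathcal S} = (p')^{-1}({\mathcal S}')$, so $p|_{\tilde{\mathcal S}}$ and $p'|_{\tilde{\mathcal S}}$ are submersions onto ${\mathcal S}$ and ${\mathcal S}'$ with kernels at $y_0$ equal to $T_{y_0}(y_0 \cdot \Gamma')$ and $T_{y_0}(\Gamma \cdot y_0)$ respectively. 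Pulling back the identities $p^{-1}(F_m) = \Gamma \cdot y_0 \cdot \Gamma'$ and $p^{-1}(L) = Y \cdot \Gamma'$ to tangent spaces, I expect to reduce transversality at $m$ to the symmetric criterion
\[ T_{y_0}\tilde{\mathcal S} \,=\, T_{y_0}(\Gamma \cdot y_0) \,+\, T_{y_0}(y_0 \cdot \Gamma') \,+\, T_{y_0} Y, \]
which is manifestly invariant under the swap $p \leftrightarrow p'$ and hence equivalent to transversality at $m'$. Fullness I would handle directly: given $\gamma' \in \Gamma'$ with source and target in $L' \cap {\mathcal S}'$, pick lifts $y_1, y_2 \in Y$, write $y_1 \cdot \gamma' = \gamma \cdot y_2$ for the unique $\gamma \in \Gamma$, observe $s(\gamma), t(\gamma) \in L \cap {\mathcal S}$, apply fullness of $R$ to conclude $\gamma \in R$, and then deduce $y_1 \cdot \gamma' = \gamma \cdot y_2 \in Y$, from which $\gamma' \in R'$ follows by transitivity of $R'$ on the $p$-fibres of $Y$ and freeness of the $\Gamma'$-action.

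The main obstacle will be properness. The strategy will be to lift a compact $K' \subset \overline{{\mathcal S}'}$ to a compact $\hat K \subset (p')^{-1}(\overline{{\mathcal S}'}) \subset X$ via finitely many local sections of $p'$, and then to set $K := p(\hat K)$, which is a compact subset of $\overline{\mathcal S}$ by Lemma \ref{lem:comm_clos}. The bimodule $Y$ will provide an identification
\[ (\Gamma_L^K \times_{L} Y)/R \,\cong\, \{\, x \in X \,:\, p(x) \in K,\; p'(x) \in L' \,\}, \]
and the analogous construction on the primed side gives a similar description of $\Gamma'^{K'}_{L'}$ modulo $R'$. By carefully tracking the two pairs of commuting actions and exploiting the properness and freeness of the Morita actions on $X$, I plan to exhibit a continuous surjection between $R \backslash \Gamma^K_L$ and $R' \backslash \Gamma'^{K'}_{L'}$ with compact fibres (realised through the above fibre products), which transfers compactness from one quotient to the other.
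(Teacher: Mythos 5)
Your proposal is correct in substance, and for the density, orbit--intersection, surjectivity and fullness claims it follows essentially the same route as the paper: the paper likewise reduces to one direction by invertibility of Morita equivalence, proves density of $L'\cap{\mathcal S}'$ by pushing a dense subset through the submersions $p_{|_Y}$ and $p'_{|_Y}$ (which is exactly the content of your appeal to an extended Lemma \ref{lem:comm_clos} applied to ${\mathcal Y}$), handles surjectivity via Remark \ref{rmk:incl}, and proves fullness by the identical lifting argument ($y_1\cdot\gamma'=\gamma\cdot y_2$, fullness of $R$ forces $\gamma\in R$, hence $y_1\cdot\gamma'\in Y$ and $\gamma'\in R'$ by freeness). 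Where you genuinely diverge is transversality: the paper runs an explicit path-lifting argument through $X$ (lift a path in ${\mathcal S}'$ via $p'$, correct it by a path in $\Gamma_L$ using transversality at $m$, then read off a path in $\Gamma'_{L'}$), whereas you pull everything back to $y_0\in Y$ and reduce both transversality conditions to the single symmetric identity $T_{y_0}\tilde{\mathcal S}=T_{y_0}(\Gamma\cdot y_0)+T_{y_0}(y_0\cdot\Gamma')+T_{y_0}Y$. This is valid (it rests on $p^{-1}({\mathcal S})=(p')^{-1}({\mathcal S}')$, $p^{-1}(F_m)=\Gamma\cdot y_0\cdot\Gamma'$, $p^{-1}(L)=Y\cdot\Gamma'$ and the fact that $p_{|_Y}$, $p'_{|_Y}$ are submersions onto $L$, $L'$), and it buys manifest $p\leftrightarrow p'$ symmetry at the price of a few tangent-space identifications that the path argument avoids; it is arguably the cleaner of the two.

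On properness the paper writes nothing (``left to the reader''), so your sketch goes beyond it. The ingredients are right (lift $K'$ to a compact $\hat K\subset(p')^{-1}(K')$ with $p'(\hat K)=K'$, set $K=p(\hat K)\subset\overline{\mathcal S}$, use the hypothesis on $K$), but the final step as phrased is not quite what you can produce: there is no natural surjection from $R\backslash\Gamma^K_L$ itself onto $R'\backslash\Gamma'^{K'}_{L'}$. What does exist is a continuous surjection from the compact space $\bigl(\Gamma^K_L\times_{t,M,p}\hat K\bigr)/R$ (a closed subset of $(R\backslash\Gamma^K_L)\times\hat K$), sending the class of $(\gamma,\hat k)$ to the class of the unique $\gamma'\in\Gamma'_{L'}$ with $\gamma\cdot\hat k\cdot(\gamma')^{-1}\in Y$ --- well defined up to $R'$ because the $R'$-action is transitive on the $p_{|_Y}$-fibres, and surjective by transitivity of the $\Gamma$-action on $p'$-fibres. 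Once you have a continuous surjection from a compact space, compactness of the target follows and the ``compact fibres'' clause is unnecessary; conversely, a surjection onto $R\backslash\Gamma^K_L$ with compact fibres would not by itself give compactness of the source. With that correction your argument closes.
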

\begin{proof}
Morita equivalences being invertible, it suffices to show one direction, the proof of which is divided in the four claims below.

{\vspace{0.3cm}}
{\em Claim 1: If $R \toto L$ is in $\overline{\mathcal{S}} $, then 
$R'\toto L'$ is in $\overline{\mathcal{S}'} $.}

\noindent
First, we have to check that $L' \cap {\mathcal S}'$ is dense in $L'$.
By assumption, $ L \cap {\mathcal S} $ is dense in $ L$, so that, since $p_{|_Y}$
is a submersion, $ p_{|_Y}^{-1} ( L \cap {\mathcal S} ) $ is dense in $ Y$. In turn, this implies that 
$ p_{|_Y }' \big(p_{|_Y}^{-1} ( L \cap {\mathcal S} ) \big) $ is dense in $ p_{|_Y}'(Y) $.
But $$ p_{|_Y }' \big(p_{|_Y}^{-1} ( L \cap {\mathcal S} ) \big) =  L' \cap {\mathcal S}'  \mbox{ and } 
p_{|_Y}'(Y) = L' ,$$ so that $ L' \cap {\mathcal S}' $ is dense in $ L'$.

Second, we have to check that $L'$ intersects transversally all the $\Gamma $-orbits contained in ${\mathcal S}' $.
That it intersects all the orbits is clear: only the transversality condition requires a justification
Choose an arbitrary $m' \in L' \cap {\mathcal S}' $, and let $y \in Y $ an element with
$p'(y) = m' $. Let $m = p(y) $. Every tangent vector $u \in T_{m'} {\mathcal S}'$ is a derivative at $\epsilon=0 $ 
of a path $\epsilon \mapsto m'(\epsilon)$ in ${\mathcal S}' $. There exists a path $\epsilon \mapsto x(\epsilon)$ in $X$ starting at $y$ that projects on the path $\epsilon \mapsto m'(\epsilon)$
though $p'$, since $p'$ is a submersion.
 Since the path $\epsilon \mapsto p \circ x (\epsilon) $ is a path in ${\mathcal S}= p((p')^{-1}({\mathcal S}')) $ that
starts at $m=p(y) $, and since $T_m {\mathcal S} = T_m L + T_m F_m  $, there exists (according to remark \ref{rmk:trans}) a path $\epsilon \mapsto \gamma(\epsilon)$,
starting at the unit element $m \in \Gamma$,
in $\Gamma_L$ such that 
$$ t(\gamma (\epsilon)) =  p \circ x (\epsilon) $$
for all $\epsilon$ small enough. The path
  $$\epsilon \mapsto  \gamma (\epsilon) \cdot x(\epsilon)  $$
is in $p^{-1}(L) \subset X$, 
for all $\epsilon$ small enough.
There exists therefore a path $\epsilon \mapsto y(\epsilon)$
 in $Y $, starting at $y$, which projects on $s(\gamma(\epsilon))=p( \gamma (\epsilon) \cdot x(\epsilon) ) $ through $p$. 
Since $\Gamma' $ acts transitively on the fibers of $p$, there exists a path $\epsilon \mapsto \gamma'(\epsilon)  $ in $\Gamma'$ (starting at the unit element $m'\in M'$) s.t. 
  $$ \gamma(\epsilon)\cdot x(\epsilon) = y(\epsilon) \cdot \gamma'(\epsilon)  $$
for all $\epsilon$ small enough. Note that $\epsilon \mapsto \gamma'(\epsilon)$ takes in fact its values in $\Gamma_{L'}$. Applying $p' $ to the previous equality amounts to
 $$ p'(\gamma(\epsilon)\cdot x(\epsilon))  =  t(\gamma'(\epsilon)) ,$$
 for all $\epsilon$ small enough. Since the first term is equal to $m'(\epsilon) $, 
taking the derivative at $\epsilon=0$, we obtain (having in mind remark \ref{rmk:trans}) that $u \in T_{m'} L' + T_{m'} F_{m'}' $, where $F_{m'}$ is the $\Gamma'$-orbit through $m'$. Hence, $T_{m'} {\mathcal S}'=T_{m'} L' + T_{m'} F_{m'}' $, which completes the proof of the first claim.

{\vspace{0.3cm}}
{\em Claim 2:
If  $R \toto L$ is surjective in $\overline{\mathcal{S}} $, then $R' \toto L'$ is surjective in $\overline{\mathcal{S}'}$}.

\noindent
 Let ${\mathcal T}'  $ be a $\Gamma$-orbit contained in $\overline{\mathcal{S}'}$, and ${\mathcal T}  = \underline{\mathcal X}^{-1}( {\mathcal T}'  ) $. By Lemma \ref{lem:comm_clos}, or remark \ref{rmk:incl}, ${\mathcal T} $ is contained in 
 $\overline{\mathcal{S}}$. By assumption therefore, $ {\mathcal T}  \cap L $ is not empty. Since $p: Y\to L$ is onto, there exists $ y \in Y$, with $p(y) \in {\mathcal T}\cap L $. Hence $L' = p_{|_Y}' (p_{|_Y}^{-1}(L)) $ contains the element $p'(y)$.
But this element also belongs to ${\mathcal T}'= p'(p^{-1}({\mathcal T}))$, so that the intersection of $L'$ with ${\mathcal T}' $ is not empty. This conclusion being valid for an arbitrary $\Gamma$-orbit contained in $\overline{\mathcal{S}'}$, $R'\toto L'$ is surjective $\overline{\mathcal{S}'} $.

{\vspace{0.3cm}}
{\em  Claim 3: If  $R \toto L$ is full in ${\mathcal S} $, then $R' \toto L'$ is full in ${\mathcal S}' $ }. 

\noindent
Let $\gamma' \in \Gamma' $ be an arrow with source and target $
m_1'\in L' \cap {\mathcal S} $ and $ m_2'\in L' \cap {\mathcal S}$ respectively.
There exists $y_1,y_2 \in Y$ with $p'(y_1)=m_1', p'(y_2)=m_2'$. The relation
$p'(y_1 \cdot \gamma'  ) = m_2'= p'(y_2) $ holds true, hence there exists $\gamma \in \Gamma$
 with $ \gamma \cdot y_2 = y_1 \cdot \gamma'  $. Since both the source and target of $\gamma $ 
 are in ${\mathcal S} \cap L $ by construction, $\gamma$ belongs to $R $, so that $\gamma \cdot y_2 $ belongs to $Y $, and there exists therefore
 $r' \in R' $ with $ \gamma \cdot y_2 = y_1 \cdot r'$. By definition of Morita equivalence, the right action is
 free and $r'= \gamma'$. In particular, $\gamma' $ belongs to $R' $, and $R' \toto L'$ is full in
 ${\mathcal S} $.

{\vspace{0.3cm}}
{\em  Claim 4: If  $R \toto L$ is proper, then $R' \toto L'$ is proper.} 
This claim is left to the reader.
\end{proof}

Proposition \ref{prop:Moritasubgrou} justifies the following definition.

\begin{defn}
A substack is said to be in $ \overline{\mathcal S} $/ surjective in $ \overline{\mathcal S} $ /full in $ {\mathcal S} $/proper if and only if one (equivalently all) of its representatives is.
\end{defn}

\section{The correspondence between equivariant resolutions and substacks}
\label{the_correspondence}

 The purpose of this section is to show the main result of the present study, namely Theorem \ref{fund:theo}, which states the existence and describes the natural one-to-one correspondence between $\Gamma$-resolutions of $\overline{\mathcal S} $ and substacks of $[\Gamma ] $ full in ${\mathcal S} $.

We divide the construction of this correspondence in several steps. In section \ref{sec:subgrou_to_reso}, we associate  a $\Gamma$-resolution of $\overline{{\mathcal S}}$  to a closed subgroupoid $R\toto L $ of $\Gamma\toto M $ full in $ {\mathcal S}$.  This resolution is shown to be surjective (resp. proper) if the subgroupoid is.

Then in section \ref{sec:reso_to_subgrou}, we associate  to a $\Gamma$-resolution of  $\bar{\mathcal S} $ a closed subgroupoid $R' \toto L' $ of a Lie groupoid $\Gamma' \toto M' $ Morita equivalent to
$\Gamma \toto M $: more precisely we construct a representative
 $(\Gamma',R',{\mathcal X}) $, with ${\mathcal X} $ a Morita
 equivalence from $\Gamma \toto M $ to $\Gamma' \toto M' $, and $ R'
 \toto L'$ a closed subgroupoid of $\Gamma' \toto M' $  full in $ {\mathcal S}' =\underline{\mathcal X} ({\mathcal S}) $.
This subgroupoid is shown to be surjective in $\overline{{\mathcal S}'} $ (resp. proper) if  the resolution is 
surjective in $\overline{{\mathcal S}} $ (resp. proper).

 These constructions are {\em not} inverse to each other. However, we show in section \ref{sec:maintheorem} that they become inverse to each other, when we go down at the level of differential stacks, by taking the quotient of the whole picture by  Morita equivalence.

\subsection{From a subgroupoid to an equivariant resolution.} \label{sec:subgrou_to_reso}

We start by a proposition, a proof of which is presented in
\cite{desing} in the case where ${\mathcal S} $ is the Lie algebroid
orbit of an integrable Lie algebroid. The proof presented follows more
or less the same lines, but is much more general.

\begin{convention}
For every left-module $(X,\phi) $ over a Lie groupoid $\Gamma \toto M$, we denote by $\Gamma \backslash X $ the quotient space, i.e. the set obtained by identifying $x \in X$ with $\gamma \cdot x \in X$ for all $x \in X$, $\gamma \in \Gamma$ s.t. $t(\gamma)=\phi (x)$.
\end{convention}

The next proposition is of crucial importance.

\begin{prop} \label{prop:LGtoSt}
Let $ \Gamma \toto M$ be a Lie groupoid, ${\mathcal S} $ a $\Gamma$-stable submanifold in $M$, and $R \toto L $ a
subgroupoid of $\Gamma \toto M $  full in~${\mathcal S}$.
 \begin{enumerate}
 \item $Z(R)=  R \backslash \Gamma_L$ is a manifold,
 \item  there exists an unique smooth or holomorphic map $\phi: Z(R) \to M $ such that the following diagram commutes
\begin{equation}\label{eq:comdia1}
 \xymatrix{ \Gamma_L   \ar[r]^{p} \ar[rd]^{t}  & Z(R) \ar@<0.5ex>[d]^{\phi} \\ & M}
\end{equation}
Moreover, the map $\phi$ takes values in $\overline{\mathcal S}$.
\item $(Z(R),\phi) $ is an equivariant resolution of $\overline{\mathcal S} $,
\item if $R\toto L $ is surjective in $\bar{\mathcal S} $, then $(Z(R),\phi)$ is a surjective resolution,
\item if $R\toto L $ is proper, then $(Z(R),\phi)$ is a proper resolution.
\end{enumerate}
\end{prop}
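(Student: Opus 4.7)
I would treat the five items sequentially, relying on the three conditions defining ``in ${\mathcal S}$'' (density (a), transversality (b), orbit-surjectivity (c)), together with the fullness and closedness of $R$.

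For items (1)--(2): since $s$ is a submersion, $\Gamma_L = s^{-1}(L)$ is a submanifold of $\Gamma$. The subgroupoid $R$ acts on $\Gamma_L$ by right groupoid multiplication, $\gamma \cdot r := \gamma r$, defined precisely when $t(r) = s(\gamma)$; this preserves $\Gamma_L$ because $s(\gamma r) = s(r) \in L$. The action is free (cancellation forces $r$ to be a unit) and proper (any $r$ with $\gamma r \in K_2$, $\gamma \in K_1$ equals $\gamma^{-1}(\gamma r) \in \Gamma_L^L$, and $R$ is closed there), so $Z(R)$ is a smooth manifold and $p$ is a principal $R$-bundle. The target map $t: \Gamma_L \to M$ is $R$-invariant ($t(\gamma r) = t(\gamma)$), hence factors uniquely and smoothly through $p$ as $\phi$; uniqueness follows from the surjectivity of $p$. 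For the claim that $\phi$ lands in $\overline{\mathcal S}$: if $s(\gamma) \in L \cap {\mathcal S}$, then $t(\gamma) \in {\mathcal S}$ by $\Gamma$-stability of ${\mathcal S}$, and for general $\gamma \in \Gamma_L$, density of $L \cap {\mathcal S}$ in $L$ (condition (a)) combined with local sections of $s$ at $\gamma$ produces a sequence $\gamma_n \to \gamma$ in $\Gamma_L$ with $s(\gamma_n) \in L \cap {\mathcal S}$, whence $t(\gamma) = \lim t(\gamma_n) \in \overline{\mathcal S}$.

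For item (3): by $\Gamma$-stability one has $t^{-1}({\mathcal S}) \cap \Gamma_L = s^{-1}(L \cap {\mathcal S})$, so density of $\phi^{-1}({\mathcal S})$ in $Z(R)$ reduces to density of $L \cap {\mathcal S}$ in $L$ combined with the fact that $s|_{\Gamma_L}$ is a submersion and $p$ is open. Bijectivity of $\phi|_{\phi^{-1}({\mathcal S})}: \phi^{-1}({\mathcal S}) \to {\mathcal S}$ uses condition (c) for surjectivity and fullness for injectivity: two arrows $\gamma_1, \gamma_2 \in \Gamma_L$ sharing a common target $m \in {\mathcal S}$ satisfy $\gamma_1^{-1}\gamma_2 \in \Gamma_{L \cap {\mathcal S}}^{L \cap {\mathcal S}} = R_{L \cap {\mathcal S}}^{L \cap {\mathcal S}}$, so $[\gamma_1] = [\gamma_2]$. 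The right $\Gamma$-module structure is then defined by $[\gamma] \cdot \gamma' := [\gamma'\gamma]$ when $s(\gamma') = t(\gamma)$; this is well-defined because left multiplication by $\Gamma$ commutes with the right $R$-action, and it manifestly extends the action on $\phi^{-1}({\mathcal S})$ forced by the remark following Definition \ref{def:desingequiv}. The delicate step---and the main obstacle---is smoothness of the inverse of $\phi|_{\phi^{-1}({\mathcal S})}$. Since ${\mathcal S}$ is open in $\overline{\mathcal S}$ by embeddedness, $\phi^{-1}({\mathcal S})$ is open in $Z(R)$, so it suffices to verify that $d_{[\gamma]}\phi$ is an isomorphism onto $T_{t(\gamma)}{\mathcal S}$ at every $[\gamma]$ above ${\mathcal S}$. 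Here the transversality condition (b) is essential: Remark \ref{rmk:trans} guarantees that every vector in $T_m {\mathcal S}$ lifts through $Tt$ to a vector in $T_\gamma \Gamma_L$, and a rank count combining this with the dimension of the $R$-orbit (which captures precisely the directions mapped by $Tt$ into $T_m F_m$, by fullness) closes the argument.

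Items (4)--(5) are short. If $R \toto L$ is surjective in $\overline{\mathcal S}$, every orbit of $\Gamma$ contained in $\overline{\mathcal S}$ meets $L$, so every $m \in \overline{\mathcal S}$ equals $t(\gamma)$ for some $\gamma \in \Gamma_L$, giving $\phi(Z(R)) = \overline{\mathcal S}$. Properness is built into the definition: for any compact $K \subset \overline{\mathcal S}$, $\phi^{-1}(K) = R \backslash \Gamma_L^K$ is compact by the properness hypothesis on $R$; for arbitrary compact $K \subset M$, $\phi^{-1}(K) = \phi^{-1}(K \cap \overline{\mathcal S})$ (since $\phi$ takes values in $\overline{\mathcal S}$), which is compact because $K \cap \overline{\mathcal S}$ is.
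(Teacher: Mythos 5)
Your overall route coincides with the paper's: the same proper-free-action argument for item (1) (recovering the limit of $r_n=\gamma_n^{-1}(\gamma_n r_n)$ inside $\Gamma_L^L$ and using closedness of $R$ there), the same factorization and density arguments for (2), fullness for injectivity and condition (c) for surjectivity of $\phi$ over ${\mathcal S}$ in (3), and the same short arguments for (4) and (5) (where you are in fact slightly more careful than the paper about compact subsets of $M$ not contained in $\overline{\mathcal S}$). There is, however, a genuine gap at exactly the step you flag as delicate. You invoke Remark \ref{rmk:trans} to assert that every $u\in T_{t(\gamma)}{\mathcal S}$ lifts through $Tt$ to a vector in $T_\gamma\Gamma_L$ for an \emph{arbitrary} arrow $\gamma\in\Gamma_{L\cap{\mathcal S}}$. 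The Remark gives this only at unit elements $m\in L\cap{\mathcal S}$, where $T_mt(T_m\Gamma_L)=T_mL+T_mF_m=T_m{\mathcal S}$ is literally the transversality hypothesis; at a non-unit $\gamma$ the source $s(\gamma)$ lies in $L\cap{\mathcal S}$ but the target $t(\gamma)$ in general does not lie in $L$ at all, so the equality $T_\gamma t(T_\gamma\Gamma_L)=T_{t(\gamma)}{\mathcal S}$ is not what the Remark says and requires an argument.

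The paper supplies this by a translation trick, which is the content of its ``Claim'': represent $u$ by a path ${\mathfrak s}(\epsilon)$ in ${\mathcal S}$, lift it to a path $\gamma(\epsilon)$ in $\Gamma_{\mathcal S}$ starting at $\gamma$ (using that $t$ restricted to $\Gamma_{\mathcal S}$ is a submersion onto ${\mathcal S}$), apply Remark \ref{rmk:trans} at the point $s(\gamma)\in L\cap{\mathcal S}$ to produce a path $\tilde\gamma(\epsilon)$ in $\Gamma_L$ issued from the unit with $t(\tilde\gamma(\epsilon))=s(\gamma(\epsilon))$, and then check that $\tilde\gamma(\epsilon)^{-1}\cdot\gamma(\epsilon)$ is a path in $\Gamma_L$ through $\gamma$ whose image under $t$ is ${\mathfrak s}(\epsilon)$. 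Some such correction is needed to make your lifting claim true. Once surjectivity of $d(t|_{\Gamma_L})$ onto $T{\mathcal S}$ along $\Gamma_{L\cap{\mathcal S}}$ is established, your concluding ``rank count'' is superfluous: you have already shown $\phi\colon\phi^{-1}({\mathcal S})\to{\mathcal S}$ is a bijective submersion, and a bijective submersion is automatically a diffeomorphism (injectivity forces the fibres, hence $\ker d\phi$, to be zero-dimensional), which is how the paper concludes without having to identify $\ker d\phi$ with the tangent space to the $R$-orbit.
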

\begin{proof}
1)  Since the source map $s$ is a surjective submersion from $\Gamma$ onto $M$, and $L $ a submanifold of $M$,
$\Gamma_L = s^{-1}(L) $ is a submanifold of $\Gamma$, acted upon on the left by $R$. We have to check that the quotient space
$R \backslash \Gamma_L $ is a manifold again.   
For that purpose, we first show that the left action of $R$ on $\Gamma_L$ is proper. Let  $(r_n, \gamma_n)  \in R \times_{t,L,s} \Gamma_L $ be a sequence such that $ (r_n\cdot \gamma_n , \gamma_n)$ takes values in a compact subset $K$ of $ \Gamma_L \times \Gamma_L$. By assumption, one can extract a subsequence $(r_{\sigma(n)} \times  \gamma_{\sigma(n)}, \gamma_{\sigma(n)}) $ that converges to $( g,g') \in K \subset \Gamma_L \times \Gamma_L$, so that $
r_{\sigma(n)}   $ converges to $r= g' \cdot g^{-1}  \in \Gamma$. We have to show that $r $ belongs to $R$. 
The subset $K$ being a compact subset, the  image of $\Gamma_L  \times \Gamma_L$ though
the maps $(x,y) \mapsto s(x) $ and $ (x,y) \mapsto s(y)$ are compact subsets $K_1$ and $K_2$ of $L$.
Since $s(r_n) \in K_1  $ and $t(r_n) = s(\gamma_n) \in K_2 $ for all $ n  \in {\mathbb N}$, and since $K_1 $ and $K_2$  are compact subsets, the source (resp. target) of $r  $ belongs to $K_1 $ (resp. $K_2$), hence both source and target belong to $L $.
In conclusion, $r$ belongs to $\Gamma_L^L $, and, since $R$ is closed in $ \Gamma_L^L$, we obtain that $r \in R$.
We eventually obtain that $r$ is an element in $ R$. In conclusion the left action of $R \toto L $ on $\Gamma_L $ is  a proper free action, so that the quotient space $R \backslash \Gamma_L $ is a manifold.
This completes the proof of 1).
2) A map $\phi $ satisfying Eq. (\ref{eq:comdia1}) always exists since the target map is not affected by left action of $R \toto L $ on $\Gamma_L $.  Since the canonical projection $\Gamma_L \to R \backslash \Gamma_L $  is a submersion, the map $\phi$ satisfying (\ref{eq:comdia1}) is  unique. Moreover, since the canonical projection $\Gamma_L \to R \backslash \Gamma_L $  is also a submersion and therefore admits local sections, the map $\phi$ is smooth or holomorphic, depending on the context. Since $L \cap {\mathcal S} $ is dense in $L $ and since the source map is a submersion, and therefore admits local sections, $\Gamma_{L \cap {\mathcal S}}$ is dense in $\Gamma_L $. Hence $ t(\Gamma_{L \cap {\mathcal S}})= {\mathcal S} $ is dense in  $t( \Gamma_{L} ) = \phi ( Z(R)  ) $. Hence 
$\phi ( Z(R)  ) \subset \overline{\mathcal S}  $.
This completes the proof of 2).
3) First,  since $L $ has by assumption a non-empty intersection with all the 
$\Gamma$-orbits contained in ${\mathcal S} $, the restriction of the
target map $t$ to $\Gamma_{L \cap {\mathcal S}} $ is a 
surjection onto ${\mathcal S} $, hence so is~$\phi: \phi^{-1}(\mathcal S) \mapsto  {\mathcal S}$.
Second, $R \toto L $ being a full subgroupoid of $\Gamma \toto M$, then,
for all $\gamma,\gamma' \in \Gamma_L $, the relation $t(\gamma) = t(\gamma') $ implies 
$$\gamma' \gamma^{-1}  \in \Gamma_{L \cap {\mathcal S} }^{L \cap {\mathcal S}} = R_{L \cap {\mathcal S}}^{L \cap {\mathcal S}} ,$$
hence $\gamma $ and $\gamma' $ define the same element in $Z(R) = R \backslash \Gamma_L $.
The restriction of $\phi :  \phi^{-1}(\mathcal S) \mapsto  {\mathcal S} $ is an injective map. In conclusion, $\phi: \phi^{-1}(\mathcal S) \mapsto  {\mathcal S} $ is a bijection.

\vspace{0.2cm}
\underline{Claim} The restriction $\phi: \phi^{-1}({\mathcal S})  \to {\mathcal S}$ is a biholomorphism / diffeomorphism. 
\vspace{0.2cm}

To show this point, it suffices to check that it is a surjective submersion, since we already know that it is a bijection.
First, since ${\mathcal S}$ is $\Gamma$-stable, $t^{-1} ({\mathcal S}) = \Gamma_{\mathcal S} $. Since moreover
the target map $t $ is  a surjective submersion from $\Gamma  $ to
$M$, the restriction to $\Gamma_{\mathcal S} $ 
of the target map is a surjective submersion from $ \Gamma_{\mathcal S}$ onto~${\mathcal S} $.

Second, for all $m \in L \cap {\mathcal S} $, the image of $T_m\Gamma_L $ through the differential  $T_mt $
of the target map at $m $ is the vector space $T_mL + T_m F_{m} $ ($F_m$ being the $\Gamma$-orbit through $m \in M$),
which is precisely assumed to be equal to $T_m {\mathcal S} $ by transversality,  so
that the target map is a surjective submersion from a neighborhood of $m \in \Gamma_L $ to a neighborhood of $m \in {\mathcal S} $.

Let us choose a point ${\mathfrak s} \in {\mathcal S} $, a tangent vector $u \in T_{\mathfrak s} {\mathcal S}
$ corresponding to an infinitesimal path $\epsilon \mapsto {\mathfrak s}( \epsilon )$. 
For every $\gamma \in \Gamma_{L \cap {\mathcal S}} $ with $t(\gamma)={\mathfrak s} $. Since the restriction to $\Gamma_{\mathcal S} $ 
of the target map is a surjective submersion  onto ${\mathcal S} $,
there exists an infinitesimal path  $\epsilon \mapsto \gamma(\epsilon) $
starting at $\gamma $ that projects on $\epsilon \mapsto {\mathfrak s}(\epsilon) $ through $t$. 
According to remark \ref{rmk:trans},  there exists an infinitesimal path
$\epsilon \mapsto \tilde{\gamma}(\epsilon) $ in $\Gamma_L $ starting at $m = s(\gamma) \in M $ such that 
  $$t(\tilde{\gamma}(\epsilon)) = s(\gamma(\epsilon)) $$
for all $\epsilon $ small enough. The infinitesimal path $\epsilon \mapsto \tilde{\gamma}(\epsilon)^{-1} \cdot \gamma (\epsilon) $
is well-defined, for all $\epsilon$ small enough, and is contained in $\Gamma_L $ by construction.
By construction also, it starts at $\gamma $ and its image through the target map is equal to the path $\epsilon \mapsto {\mathfrak s}(\epsilon)$, i.e. is an infinitesimal path that corresponds to $u $. Hence the differential of the restriction to $\Gamma_L $ of the target map is surjective, which proves the claim, and completes the proof of the fact that $(Z(R),\phi) $ is a resolution.

Last, the right action of $\Gamma \toto M$ on $(\Gamma_L,t)$ goes to the quotient and defines a right-$\Gamma$ action of $\Gamma \toto M$
on $(Z(R),\phi)$, hence this resolution is equivariant.
4) Now, if $R \toto L $ is moreover assumed to be surjective in
   $\overline{\mathcal S}$, then $L$ intersects all the groupoid leaves contained in $\Gamma $, and
the restriction of the target map $t$ to $\Gamma_{L } $ is a surjection onto $\overline{\mathcal S} $, hence so is~$\phi: Z(R) \mapsto  \overline{\mathcal S}$.
5) is straighforward, for the inverse image of a compact subset $K
   \subset \overline{\mathcal S} $ is precisely $R \backslash \Gamma^K_L $.
\end{proof}

We finish this section with a characterization of $\Gamma
$-resolutions of the type presented in \ref{prop:LGtoSt}, a point that shall be strongly useful in the next section. We start with a definition.


\begin{defn}\label{def:Lcomp}
Let ${\mathcal S} $ be an embedded $\Gamma$-stable submanifold of $M$, where
$\Gamma \toto M$ is a Lie groupoid.
 Let  $L \subset M $ be a submanifold  with $L \cap {\mathcal S}$ dense in $L$.
 A $\Gamma$-resolution $(Z, \phi) $ of $\overline{\mathcal S} $ is said to be {\em $L$-compatible} if
\begin{enumerate}
\item  there exists a submanifold $\tilde{L} $ such that the  restriction $\phi_{|_{\tilde{L}}}$ of $\phi $ to $\tilde{L} $ is a  
 biholomorphism / diffeomorphism from $\tilde{L} $ to $L$,
\item $\tilde{L} $ is transverse to the foliation on $Z$ given by the $\Gamma $-action, and intersects all the $\Gamma$-orbits 
contained in $\phi^{-1}({\mathcal S}) $
\end{enumerate}
\end{defn}

\begin{rmk}
Notice that $L$ has to be contained is $\overline{\mathcal S}$, and that, when $L$ is a given submanifold with $L \cap {\mathcal S}$ dense in $L$, $\tilde{L}$ is unique when it exists.
\end{rmk}

\begin{rmk}
 Notice that when ${\mathcal S} $ is an algebroid leaf, and $L $ intersects all the orbits
contained in $\overline{\mathcal S} $, $L $ is easily proved to be what is called in \cite{desing} an algebroid crossing. 
\end{rmk}

\begin{rmk} \label{LAaction}
To a Lie groupoid action of $\Gamma \toto M$ on a right-module $(Z,\phi)$  is associated a Lie algebroid action, i.e. a map
$\chi:A_m \to T_zZ $, for all $m \in \phi (Z), z \in Z $ s.t. $\phi (z) =m$, which induces a Lie algebra morphism
from the space $\Gamma (A) $ of sections of $A$ to the Lie algebra of vector fields on $Z$. 
The transversality assumption in the previous definition means that:
 $$ T_{\tilde{l}} Z = T_{\tilde{l}} L + \chi (A_{\phi(\tilde{l})})  $$
 for all $\tilde{l} \in \tilde{L}$.
\end{rmk}

\begin{example} \label{exampleOfLcomp}
Let $R \toto L $ is a closed Lie subgroupoid of $\Gamma \toto M$ full in ${\mathcal S} $. Then the equivariant resolution
$(Z(R),\phi)$ is $L$-compatible, the manifold $ \tilde{L}$ being in fact the image of $\epsilon(L) \subset \Gamma_L $ (recall that
$\epsilon: M \hookrightarrow \Gamma$ stands for the unit map) in $Z(R) = R \backslash \Gamma_L $. 
\end{example}

The previous example is almost the unique possible one,
as shown by the following proposition.

\begin{prop}
\label{prop:subgroupoid2}
Let $(Z,\phi)$ be an $L$-compatible $\Gamma$-resolution of $\overline{\mathcal S} $,
for some submanifold $L \subset M$ with $L \cap {\mathcal S} $ dense in $L$.
Let $\tilde{L} $ be the (unique) submanifold as in Definition \ref{def:Lcomp} and $\psi: L \to \tilde{L}$ the inverse of the
restriction of $\phi$ to $\tilde{L} $. 
Then,
\begin{enumerate}
\item  The set $R \subset L $ of all arrows $r \in \Gamma_L $ such that
   $$  \psi \big( s(r)\big)  \cdot r   \in \tilde{L}$$
is a Lie subgroupoid  of $\Gamma \toto M $ full in ${\mathcal S} $.
\item the $\Gamma$-orbit of $\tilde{L} $ in $Z$ is an open subset of
  $Z$  which coincides (as  a $\Gamma $-resolution) to the resolution $(Z(R),\phi_R)$.
\item if, moreover, $\tilde{L} $ has an intersection with all the $\Gamma$-orbits of $Z$, then the resolutions $(Z,\phi)$ and  $ (Z(R),\phi_R) $  coincide.
\end{enumerate}
\end{prop}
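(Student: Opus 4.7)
The plan is to build everything out of one auxiliary map
\[
F\colon \Gamma_L\longrightarrow Z,\qquad r\longmapsto \psi(s(r))\cdot r,
\]
for then $R=F^{-1}(\tilde L)$ tautologically, and the $\Gamma$-orbit of $\tilde L$ in $Z$ is exactly the image of $F$. The whole argument will rest on a single key identity
\[
\psi(s(r))\cdot r=\psi(t(r))\qquad\text{for all }r\in R,
\]
which is forced by $L$-compatibility: both sides lie in $\tilde L$ and project to $t(r)\in L$ under $\phi$, and $\phi|_{\tilde L}$ is injective.

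For (1), I would first verify the groupoid axioms for $R$ purely algebraically from this identity: units land in $R$ because $F(\epsilon(l))=\psi(l)\in\tilde L$; inverses are preserved because $\psi(t(r))\cdot r^{-1}=\psi(s(r))\in\tilde L$; and products are preserved by direct substitution using associativity of the action. The submanifold structure on $R$ then comes from showing that $F$ is a submersion at every point of $R$: varying the source of $r$ inside $L$ contributes tangent vectors to $\tilde L$ at $F(r)$ via $\psi$, while varying $r$ along its source fibre contributes tangent vectors in the image of the Lie algebroid action at $F(r)$, and by Remark~\ref{LAaction} these two subspaces span $T_{F(r)}Z$. Fullness of $R$ in $\mathcal S$ is then immediate: if $s(\gamma),t(\gamma)\in L\cap\mathcal S$, then $\psi(s(\gamma))\cdot\gamma$ and $\psi(t(\gamma))$ are two elements of $\phi^{-1}(\mathcal S)$ over the same base point, hence equal since $\phi$ is bijective on $\phi^{-1}(\mathcal S)$.

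For (2), let $W\subset Z$ denote the $\Gamma$-orbit of $\tilde L$, i.e.\ the image of the action map $\tilde L\times_M\Gamma\to Z$. The transversality of $\tilde L$ translates into this action map being a submersion, so $W$ is open in $Z$. The same tangent-space computation shows that $F\colon\Gamma_L\to W$ is a surjective submersion, and by construction it factors through $Z(R)=R\backslash\Gamma_L$ to give a smooth surjection $\bar F\colon Z(R)\to W$. For injectivity, if $F(r_1)=F(r_2)$ then $\phi$ forces $t(r_1)=t(r_2)$, so there is a well-defined arrow with both endpoints in $L$ relating $r_1$ to $r_2$, and the key identity shows it lies in $R$; hence $r_1$ and $r_2$ give the same class. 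A bijective submersion is a diffeomorphism, and compatibility of $\bar F$ with $\phi$ and with the right $\Gamma$-action is built in, so $\bar F$ is an isomorphism of $\Gamma$-resolutions between $(Z(R),\phi_R)$ and the open submodule $W\subset Z$.

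Claim (3) is then immediate: under the extra hypothesis, $\tilde L$ meets every $\Gamma$-orbit, so $W=Z$, and (2) gives $(Z,\phi)\simeq(Z(R),\phi_R)$. I expect the one genuinely subtle step, used in both (1) and (2), to be the submersion argument---identifying the image of $dF_r$ at each $r\in R$ with $T_{F(r)}\tilde L+\chi(A_{t(r)})$---for which the transversality of $\tilde L$ to the orbit foliation is indispensable. Everything else reduces to careful bookkeeping around the identity $\psi(s(r))\cdot r=\psi(t(r))$.
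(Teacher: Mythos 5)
Your plan coincides with the paper's: the paper works with the same map $\Psi(\gamma)=\psi(s(\gamma))\cdot\gamma$ on $\Gamma_L$, sets $R=\Psi^{-1}(\tilde L)$, exploits the identity $\psi(s(r))\cdot r=\psi(t(r))$ throughout, descends $\Psi$ to an injective submersion $Z(R)\to Z$ whose image is the $\Gamma$-orbit of $\tilde L$, and gets (3) from (2) exactly as you do. Your fullness and injectivity arguments are the paper's.

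The one step that does not work as written is the submersion computation, which you rightly identify as the crux. At a non-unit $r\in R$, varying $s(r)$ inside $L$ does \emph{not} contribute vectors of $T_{F(r)}\tilde L$: lifting a path $l(\epsilon)$ in $L$ to $r(\epsilon)$ with $s(r(\epsilon))=l(\epsilon)$ (say via a bisection through $r$), the curve $F(r(\epsilon))=\psi(l(\epsilon))\cdot r(\epsilon)$ contributes the image of $T_{\psi(s(r))}\tilde L$ under translation by $r$, which need not lie in $T\tilde L$ even though $F(r)\in\tilde L$. So the decomposition you invoke from Remark \ref{LAaction} at the point $F(r)=\psi(t(r))$ is not the one realized by the image of $dF_r$. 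The fix is standard: apply the transversality of Remark \ref{LAaction} at $\psi(s(r))$ and use that translation by $r$ carries $\chi(A_{s(r)})$ onto $\chi(A_{t(r)})$, so the translated $T\tilde L$ together with the orbit directions still spans $T_{F(r)}Z$; equivalently (as the paper does) check that $F$ is a submersion along the units $\epsilon(L)$, where your decomposition is literally correct, and propagate to all of $\Gamma_L$ via $F(\tilde\gamma\gamma)=F(\tilde\gamma)\cdot\gamma$. A smaller omission: ``full in ${\mathcal S}$'' presupposes that $R\toto L$ is a subgroupoid \emph{in} ${\mathcal S}$ in the sense of Definition \ref{def:forsubgroup}, so you should also record that $L$ meets, and is transverse to, every $\Gamma$-orbit in ${\mathcal S}$; both follow from the corresponding properties of $\tilde L$ through the $\Gamma$-equivariant bijection $\phi:\phi^{-1}({\mathcal S})\to{\mathcal S}$.
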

\begin{rmk}
We recall from convention \ref{con:isom} that we identify isomorphic 
$\Gamma$-resolutions (which is justified by the fact that the isomorphism between two isomorphic resolutions is unique when it exists),
so that the reader shall not surprised when we say that resolutions "coincide", and not simply that they are "isomorphic".
\end{rmk}
\begin{proof}
1) For all $r \in R$, the point $ \psi \big( s(r)\big)  \cdot r $ is contained in $ \tilde{L}$, and therefore has to be equal to $ \psi \big( t(r) \big)$. In particular, both the source and target on an element in $R$ are in $L$. It is then clear from the definition that $ R \toto L$ is a subgroupoid of $\Gamma \toto M $. Also, $R$ is a closed subset of $\Gamma_L^L$ by its very construction.

Our next task is to prove that it is a Lie subgroupoid: since $L$ is a submanifold, all we  need to prove in that $R $
is a submanifold as well. We do this by considering the map $\Psi: \Gamma_L \to Z$ given by 
$$\Psi :\gamma
\mapsto \psi \big( s(\gamma) \big) \cdot \gamma .$$
 By construction $R = \Psi^{-1}(\tilde{L}) $, so that it suffices, in order to ensure that $R $ is a submanifold, to prove that $\Psi$ is a submersion. 

We choose some arbitrary $\gamma \in \Gamma_L$  and $u \in T_z Z $, where $z=\Phi(\gamma)$. For every infinitesimal path $z(\epsilon) $ corresponding to $u$, $ \phi (z(\epsilon)) $ is an infinitesimal path corresponding to $T_z \phi (u) $. Since the target map $t $ is a submersion from $\Gamma$ to $M$, there exists a path $\gamma(\epsilon) \in \Gamma $ starting from $\gamma$ and whose image through $t$ is $ \epsilon \mapsto \phi (z(\epsilon)) $. 
The path $z(\epsilon ) \cdot  \gamma^{-1}(\epsilon)  $ is an
infinitesimal path starting from $\tilde{l}:=\psi (s(\gamma) )  $.

The transversality condition implies that  $\Psi $ is a submersion in a neighborhood of
$\epsilon (L) \subset \Gamma_L$. In particular, it is a submersion in a
neighborhood of $  \tilde{l} = \psi (s(\gamma) ) $, and there exists an
infinitesimal path $ \epsilon \mapsto \tilde{\gamma}(\epsilon) \in \Gamma_L $ such that  
$$ \Psi \big( \tilde{\gamma}({\epsilon})\big) = z(\epsilon ) \cdot  \gamma^{-1}(\epsilon)   ,$$
for all $\epsilon  $ small enough. The latter can be rewritten as:
 $$ \Psi \big( \tilde{\gamma}({\epsilon})  \gamma (\epsilon )\big) = z(\epsilon ) $$
But $ \epsilon \mapsto \tilde{\gamma}({\epsilon})  \gamma (\epsilon ) $ is a path in $ \Gamma_L$ par construction. Taking the derivative at $
\epsilon = 0$, we obtain:
 $$  {\rm d}_l \Psi \left( \left.  \frac{{\rm d} \,
 \tilde{\gamma}({\epsilon})  \gamma (\epsilon ) }{{\rm d} \epsilon }
 \right|_{\epsilon =0} \right) = u .$$
Hence $\Psi  $ is a submersion, and $R \toto L $ is a Lie subgroupoid of $\Gamma \toto M$.

We now have to check that this Lie groupoid is in ${\mathcal S} $.
Since the restriction to $\phi^{-1}({\mathcal S}) $ of $\phi $ is invertible (and $\Gamma$-equivariant), it is immediate that
$L $ intersects transversally all the $\Gamma$-orbits contained in ${\mathcal S}$, since, by assumption, 
$\tilde{L} $ intersects transversally all the $\Gamma$-orbits contained in $\phi^{-1}({\mathcal S})$.
By its very construction, this Lie groupoid satisfies 
$$R_{L\cap {\mathcal S}}^{L\cap {\mathcal S}}=\Gamma_{L\cap {\mathcal S}}^{L\cap {\mathcal S}},$$
i.e it is a full Lie subgroupoid. This completes the proof of 1).

2) For any pair  $\gamma ,\gamma' \in \Gamma$ defining the same
   element in $Z(R) $, i.e. such that there exists $r \in R $  with $\gamma =r \gamma' $,
   one computes: 
$$ \begin{array}{rcl} \Psi (\gamma ') =  \psi \big( s(\gamma') \big) \cdot \gamma' &=&  \psi \big( s(\gamma') \big) \cdot r^{-1} 
r\gamma' \\                                          &=&  \psi\big( s(r) \big) \cdot r \gamma'  \\ 
                                                     &=&  \psi \big(s(\gamma) \big)
                                                     \cdot \gamma \\
                                            &=& \Psi (\gamma),
                                        \end{array} $$ 
where the relation $ \psi \big( s(r') \big) \cdot r'=\psi(t(r'))$ for
all $r' \in R$ has been used. As a conclusion, the map $\Psi$ goes to the quotient and defines a map $\tilde{\Psi}$ 
from $ Z(R)$ to $Z $, which is a morphism of resolution, and whose image is by construction the orbit of $\tilde{L} $.
This map $\tilde{\Psi}$ is also injective since $ \Psi (\gamma) = \Psi (\gamma')$ implies that $\gamma$ and $ {\gamma'}^{-1}$ are 
compatible and that the following identities hold:
   $$ \psi  \big( s(\gamma) \big) \cdot   \gamma (\gamma')^{-1}  = \psi \big( s(\gamma') \big). $$  
which, in turn, since both $ \psi  \big( s(\gamma) \big)$ and $  \psi
\big( s(\gamma') \big)$
belong to $\tilde{L}$ , gives that $\gamma (\gamma')^{-1         }  \in R$ (this
is the very definition of $R$). Hence
$\gamma$ and $\gamma'$ define the same element modulo the $R$-action. Moreover, the map $ \tilde{\Psi}$ is again a submersion, since $\Psi $ is a submersion. Since an injective submersion is in fact an open immersion, this completes the proof of 2).

3) follows from the fact that the image of the map $\Psi$ is precisely the orbit of $ \tilde{L}$ under the action of $\Gamma$. 
\end{proof}

\subsection{From an equivariant resolution to a subgroupoid.}
\label{sec:reso_to_subgrou}

Let $ \Gamma \toto M$ be a Lie groupoid, ${\mathcal S} $ a $\Gamma$-stable submanifold in $M$,
and $(Z,\phi) $ a $\Gamma$-resolution of $\overline{\mathcal S} $. 

By the {\em direct product Lie groupoid  $(\Gamma \toto M ) \times (Z \times Z \toto Z)  $}, 
we mean the Lie groupoid structure on $\Gamma \times Z \times Z$ with unit manifold $M \times Z$, with unit map 
$(m,z) \mapsto (\epsilon(m),z,z) $, with source and target maps $s:(\gamma,z_1,z_2) \mapsto (s(\gamma),z_1)$ and $t:(\gamma,z_1,z_2) \mapsto (t(\gamma),z_2)$ respectively, with product
 $$ (\gamma,z_1,z_2)\cdot (\gamma',z_2,z_3) = (\gamma\gamma',z_1,z_3) $$
  (for all $\gamma,\gamma'$ with $t(\gamma)=s(\gamma')$
 and all $z_1,z_2,z_3 \in Z$)
 and with inverse map $(\gamma,z_1,z_2) \mapsto (\gamma^{-1},z_2,z_1)  $.
This groupoid structure is the direct product of the Lie groupoid $\Gamma \toto M$ with the pair groupoid
$Z \times Z \toto Z $, hence the name. 

There is a natural Morita equivalence ${\mathcal X} $ between the Lie groupoid $\Gamma \toto M $
and the direct product Lie groupoid  $(\Gamma \toto M ) \times (Z \times Z \toto Z)  $ defined 
as follows:
\begin{enumerate}
\item $X= \Gamma \times Z $
\item $p: X \to M $ is the map $(\gamma,z) \mapsto s(\gamma) $, while
  $p': X \to M \times Z$ is the map $\gamma,z \mapsto (t(\gamma),z)$,
\item the right and left actions given respectively by:
$$ \left\{ \begin{array}{rcll}
 \gamma' \cdot (\gamma,z) &= &(\gamma' \gamma, z') & \forall \gamma',
 \gamma \in \Gamma, z \in Z \mbox{ s.t. } t(\gamma') =s(\gamma) \\
 (\gamma,z) \cdot (\gamma', (z,z')) &=& (\gamma \cdot \gamma', z') 
 & \forall \gamma',
 \gamma \in \Gamma, z,z' \in Z \mbox{ s.t. } t(\gamma) =s(\gamma') \end{array}\right.  $$
\end{enumerate}
The submanifold $\underline{\mathcal X}({\mathcal S}) $ corresponding
to ${\mathcal S} $ through this Morita equivalence 
${\mathcal X} $ is $ {\mathcal S}'={\mathcal S} \times Z $. 
The resolution $\underline{\mathcal X}\big((Z,\phi)\big) $
corresponding to $(Z,\phi) $ is the resolution 
$( Z \times Z, \phi \times {\rm id}_Z ) $. The right action of
$(\Gamma \toto M) \times (Z \times Z \toto Z)  $ on $(Z \times Z,\phi
\times {\rm id}_Z)$ is given by:
 $$  (z, z_1) \cdot \big( \gamma, (z_1,z_2) \big)  = (z \cdot \gamma, z_2 )  , $$
for all $\gamma \in \Gamma$ and $ z,z_1, z_2 \in Z $ with $s(\gamma) = \phi(z) $.

\begin{convention}
We shall from now introduce the shorthands $\widehat{\Gamma} \toto \widehat{M} $ for $(\Gamma \toto M) \times (Z \times Z \toto Z) $,   $ \widehat{{\mathcal
  S}}$ for $   \underline{\mathcal X} ({\mathcal S}) =  {\mathcal  S} \times Z $,
  $(\widehat{Z},\widehat{\phi}) $ for $ \underline{\mathcal X} \big((Z,\phi) \big)  =( Z \times Z, \phi \times {\rm id}_Z ) $.
 \end{convention}

The reader should have in mind the previous conventions for a correct understanding of the coming 
proposition:
   
\begin{prop} Let notations be as in the lines before.
The resolution $(\widehat{Z}, \widehat{\phi}) $ is $L$-compatible, where
 $L = \{(\phi(z),z ) | z \in Z \} $.
\end{prop}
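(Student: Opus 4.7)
The natural candidate is $\tilde{L} = \Delta_Z := \{(z,z) \mid z \in Z\} \subset Z \times Z = \widehat{Z}$. The idea is that under $\widehat{\phi}(z_1,z_2) = (\phi(z_1),z_2)$, the diagonal is sent to the graph $L$ of $\phi$ in a tautological way, so checking the definition reduces to routine verifications once the right $\tilde{L}$ is identified.

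First I would verify the preliminary (non-numbered) hypothesis of Definition \ref{def:Lcomp}, namely that $L$ is a submanifold of $\widehat{M} = M \times Z$ with $L \cap \widehat{\mathcal{S}}$ dense in $L$. That $L$ is a submanifold is immediate since $L$ is the graph of the smooth map $\phi: Z \to M$ and is parametrised by the embedding $z \mapsto (\phi(z),z)$. For density, note that
\[
L \cap \widehat{\mathcal{S}} = L \cap (\mathcal{S} \times Z) = \{(\phi(z),z) \mid z \in \phi^{-1}(\mathcal{S})\},
\]
so the claim follows from the fact that $\phi^{-1}(\mathcal{S})$ is dense in $Z$ (by the very definition of a resolution of $\overline{\mathcal{S}}$).

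Next, for condition (1) of Definition \ref{def:Lcomp}, the restriction $\widehat{\phi}|_{\Delta_Z}$ sends $(z,z)$ to $(\phi(z),z)$, and its inverse is the restriction to $L$ of the second projection $M \times Z \to Z$ followed by the diagonal embedding. Both maps are smooth/holomorphic, so $\widehat{\phi}|_{\Delta_Z}:\Delta_Z \to L$ is a diffeomorphism (resp.\ biholomorphism).

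For condition (2), I would use the explicit description of the $\widehat{\Gamma}$-action recalled just before the statement: the orbit through a point $(z_1,z_2)$ is $\mathcal{O}_{z_1} \times Z$, where $\mathcal{O}_{z_1}$ denotes the $\Gamma$-orbit of $z_1$ in $Z$. First, every orbit contained in $\widehat{\phi}^{-1}(\widehat{\mathcal{S}}) = \phi^{-1}(\mathcal{S}) \times Z$ is of the form $\mathcal{O}_{z} \times Z$ with $z \in \phi^{-1}(\mathcal{S})$, and it meets $\Delta_Z$ at $(z,z)$, so $\Delta_Z$ intersects all such orbits. For the transversality (in the sense of Remark \ref{LAaction}), at a point $(z,z) \in \Delta_Z$ one has
\[
T_{(z,z)} \Delta_Z = \{(v,v) \mid v \in T_z Z\}, \qquad T_{(z,z)}(\mathcal{O}_z \times Z) \supseteq \{0\} \times T_z Z.
\]
Any vector $(u_1,u_2) \in T_z Z \times T_z Z = T_{(z,z)}\widehat{Z}$ decomposes as $(u_1,u_1) + (0,u_2 - u_1)$, proving that the sum of the tangent to $\Delta_Z$ and the tangent to the $\widehat{\Gamma}$-orbit is all of $T_{(z,z)} \widehat{Z}$.

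I do not expect any serious obstacle: the only subtlety is correctly identifying the $\widehat{\Gamma}$-orbit structure on $\widehat{Z} = Z \times Z$ (coming from the pair-groupoid factor $Z \times Z \toto Z$), after which the transversality is immediate because the pair-groupoid orbit already spans the full second factor.
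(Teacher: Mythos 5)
Your proposal is correct and follows essentially the same route as the paper's proof: take $\tilde{L}$ to be the diagonal of $Z\times Z$, note that $\widehat{\phi}=\phi\times\mathrm{id}$ restricts to a diffeomorphism onto the graph $L$, and use the fact that each $\widehat{\Gamma}$-orbit contains $\{0\}\times T_zZ$ in its tangent space (and sweeps out the full second factor) to get transversality and the intersection property. Your additional check that $L\cap\widehat{\mathcal S}$ is dense in $L$ is a sensible extra detail the paper leaves implicit.
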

\begin{proof}
Let $\tilde{L} \subset \hat{Z} = Z \times Z$ be the diagonal.
The map $\widehat{\phi} = \phi \times {\rm id}$ restricts to a biholomorphism / diffeomorphism from $\tilde{L} $ to its image $L$,
which is a submanifold of $M \times Z$. 
Now,  $\tilde{L} $ is transverse to the  action of $\widehat{\Gamma} \toto \widehat{M} $
on $\widehat{Z}= Z\times Z $, since the tangent space at a point $(z,z) \in \tilde{L}$ of the leaves of the
$\Gamma'$ action always contain the space $\{ (0, u), u \in T_z M \}$, so
that its sum with the tangent space of the diagonal is $T_{(z,z)} (M \times \widehat{Z})$.
It clearly intersects all the $\widehat{\Gamma}$-orbit, since 
$(z,z') \in \hat{Z}=Z \times Z$ and $(z,z)$ are in the same $\widehat{\Gamma}$-orbit.
\end{proof}

Let $\widehat{R} \toto \widehat{L} $ be the Lie subgroupoid full in ${\mathcal S}' $ corresponding to $\tilde{L} $ as in Proposition \ref{prop:subgroupoid2} (1). The next corollary follows immediately  from Proposition 
\ref{prop:subgroupoid2} (3).

\begin{coro}
The resolution $(Z(\widehat{R}), \phi_{\widehat{R}}) $ associated to
$\widehat{R} \toto \widehat{L}$  is  the resolution $ (\widehat{Z} , \widehat{\phi})$ corresponding to $(Z,\phi) $ via the Morita equivalence $ {\mathcal X}$. In equation:
 $$  (Z,\phi) =  \underline{\mathcal X}^{-1}\big( (Z(\widehat{R}), \phi_{\widehat{R}}) \big) $$
\end{coro}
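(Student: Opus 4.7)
The plan is to deduce the corollary directly from Proposition \ref{prop:subgroupoid2}(3), which has been tailored for precisely this kind of situation. The preceding proposition already established that $(\widehat{Z},\widehat{\phi})$ is $L$-compatible with accompanying submanifold $\tilde{L} = \{(z,z) \mid z \in Z\}$ (the diagonal of $Z \times Z$), and $\widehat{R} \toto \widehat{L}$ is the subgroupoid produced by applying part (1) of Proposition \ref{prop:subgroupoid2} to this data. Hence all that remains is to verify the additional hypothesis of part (3), namely that $\tilde{L}$ meets every $\widehat{\Gamma}$-orbit in $\widehat{Z}$.

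This verification is a one-line computation using the explicit action formula recalled just before the corollary. Given any $(z,z_1) \in \widehat{Z} = Z \times Z$, the element $(\epsilon(\phi(z)), (z_1, z)) \in \widehat{\Gamma}$ is composable with $(z, z_1)$ (the source $\phi(z)$ matches $\phi(z)$ and the second coordinate $z_1$ agrees), and the action formula gives
$$ (z, z_1) \cdot \big( \epsilon(\phi(z)), (z_1, z) \big) = (z, z) \in \tilde{L} . $$
Thus every orbit of $\widehat{\Gamma}$ acting on $\widehat{Z}$ hits the diagonal $\tilde{L}$.

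Proposition \ref{prop:subgroupoid2}(3) then yields the identification of $\widehat{\Gamma}$-resolutions
$$ (Z(\widehat{R}), \phi_{\widehat{R}}) = (\widehat{Z}, \widehat{\phi}) . $$
Since $(\widehat{Z}, \widehat{\phi}) = \underline{\mathcal X}\big((Z,\phi)\big)$ by the very construction of $\widehat{\Gamma} \toto \widehat{M}$ and the transported resolution, applying $\underline{\mathcal X}^{-1}$ to both sides gives the stated equality $(Z,\phi) = \underline{\mathcal X}^{-1}\big((Z(\widehat{R}), \phi_{\widehat{R}})\big)$. There is no genuine obstacle here: the full content sits in Proposition \ref{prop:subgroupoid2}(3), and the only remaining check—that the diagonal intersects every orbit—is immediate from the freedom provided by the pair-groupoid factor $Z \times Z \toto Z$, which allows one to shift the second coordinate of a point in $\widehat{Z}$ to anything within its orbit.
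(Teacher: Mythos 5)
Your proof is correct and follows essentially the same route as the paper: the paper derives the corollary immediately from Proposition \ref{prop:subgroupoid2}(3), having already verified in the preceding proposition that the diagonal $\tilde{L}$ meets every $\widehat{\Gamma}$-orbit (precisely the observation that $(z,z')$ and $(z,z)$ lie in the same orbit, which your explicit computation with the unit arrow makes concrete).
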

\subsection{The main theorem.}
\label{sec:maintheorem}

We start with a proposition, which means that "Full Lie subgroupoids
give isomorphic resolutions if and only if they are Morita
equivalent". 

\begin{prop}\label{fund:prop}
Let  ${\mathcal X} $ be a Morita equivalence between  $\Gamma \toto M $ and $\Gamma' \toto L' $, ${\mathcal S} $  a $\Gamma$-stable submanifold
 of $M $, and $ {\mathcal S}' = \underline{\mathcal X} ( {\mathcal S}
 ) $, $R \toto  L$ a subgroupoid full in ${\mathcal S} $, and $R ' \toto L' $ a subgroupoid full in ${\mathcal S} $'.

Then the following are equivalent:
\begin{enumerate}
\item[(i)] the resolutions  $\underline{\mathcal X}\big(( Z(R),\phi_R )\big) $ and $(Z(R'),
\phi_{R'})    $ coincide
\item[(ii)] there exists a Morita equivalence of Lie subgroupoids 
of the form $({\mathcal X},{\mathcal Y},{\mathfrak i}) $ between the
subgroupoids $R \toto L $ of $\Gamma \toto M$ and $R' \toto L' $ of
$\Gamma' \toto M' $.
\end{enumerate}
\end{prop}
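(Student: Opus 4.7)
The plan is to unpack $\underline{\mathcal{X}}(Z(R))$ into a more workable form and then treat the two implications symmetrically. As a preliminary, I would establish the natural $\Gamma'$-equivariant identification
\[
\underline{\mathcal{X}}(Z(R)) \;\cong\; R \backslash p^{-1}(L),
\]
where $R$ acts on $p^{-1}(L) \subset X$ by restriction of the left $\Gamma$-action, and the right $\Gamma'$-action is inherited from $X$. Indeed, in $\underline{\mathcal{X}}(Z(R)) = (Z(R) \times_M X)/\Gamma$, every class contains a canonical representative $([\epsilon(s(\gamma_0))], \gamma_0 \cdot x)$, obtained by acting with $\gamma_0 \in \Gamma$; two such representatives coincide precisely when their second coordinates are $R$-related in $p^{-1}(L)$. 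The anchor reads $[x] \mapsto p'(x)$.

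For (ii)$\Rightarrow$(i), use $\mathcal{Y}$ to define $\bar{\Phi}: R\backslash p^{-1}(L) \to R' \backslash \Gamma'_{L'}$ as follows: for $x \in p^{-1}(L)$, pick any $y \in Y$ with $p(y) = p(x) \in L$ (which exists as $p|_Y$ is surjective), and set $\bar{\Phi}([x]) := [\gamma']$, where $\gamma' \in \Gamma'$ is the unique arrow with $x = y \cdot \gamma'$. Automatically $s(\gamma') = p'(y) \in L'$. Independence of the choice of $y$ follows from right-transitivity of $R'$ on the fibers of $p|_Y$: if $y_1 = y \cdot r'$ for $r' \in R'$, then the corresponding $\gamma'_1$ satisfies $\gamma' = r' \gamma'_1$, hence $[\gamma'] = [\gamma'_1]$ in $R'\backslash \Gamma'_{L'}$. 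A parallel short computation gives $R$-invariance, $\Gamma'$-equivariance, and bijectivity, the inverse being $[\gamma'] \mapsto [y \cdot \gamma']$ for any $y \in Y$ over $s(\gamma')$.

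For (i)$\Rightarrow$(ii), given an isomorphism $\psi: Z(R') \xrightarrow{\sim} R\backslash p^{-1}(L)$ of $\Gamma'$-modules, I set
\[
Y := q^{-1}\bigl(\psi(\sigma'(L'))\bigr) \subset p^{-1}(L) \subset X,
\]
where $q: p^{-1}(L) \to R\backslash p^{-1}(L)$ is the quotient and $\sigma': L' \hookrightarrow Z(R')$, $l' \mapsto [\epsilon(l')]$, is the submanifold from Example~\ref{exampleOfLcomp}. Taking $\mathfrak{i}$ to be the inclusion, $Y$ is a submanifold of $X$ since $q$ is a submersion; $\Gamma'$-equivariance of $\psi$ also forces $p'(Y) \subset L'$. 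The left $R$- and right $R'$-actions on $X$ preserve $Y$: the former because $R$-orbits lie in $q$-fibers and $Y$ is a union of $q$-fibers, the latter because $\sigma'(L')$ is preserved by the right $R'$-action on $Z(R')$, via $[\epsilon(l')]\cdot r' = [r'] = [\epsilon(t(r'))]$. Freeness, properness and commutativity are inherited from $X$. The essential transitivity follows from $\Gamma'$-equivariance of $\psi^{-1}$: for $y_1, y_2 \in Y$ over the same $l \in L$, the unique $\gamma' \in \Gamma'$ with $y_2 = y_1 \cdot \gamma'$ satisfies, after applying $\psi^{-1}$, $[\gamma'] = [\epsilon(p'(y_2))]$ in $Z(R')$, forcing $\gamma' \in R'$; analogously, if $p'(y_1) = p'(y_2)$, then $\psi^{-1}([y_1]) = \psi^{-1}([y_2])$, so $y_1, y_2$ are already $R$-related.

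The main obstacle is the direction (i)$\Rightarrow$(ii): identifying the correct candidate for $Y$ as the pullback of the unit submanifold $\sigma'(L') \subset Z(R')$ along $\psi$ and $q$, and then checking transitivity of the induced actions on fibers. Everything else reduces to direct unwinding of definitions, including the consistency check that the Morita equivalence produced by (i)$\Rightarrow$(ii), when fed back into (ii)$\Rightarrow$(i), recovers the original $\psi$; this follows from the explicit inverse formula $[\gamma'] \mapsto [y \cdot \gamma']$.
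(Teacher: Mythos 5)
Your proposal is correct in substance and, at bottom, builds the same objects as the paper: your $Y=q^{-1}\bigl(\psi(\sigma'(L'))\bigr)$ is exactly the paper's preimage of $\tilde{L}'$ under the projection $\Pi$ restricted to $P=\tilde L\times_{\phi,L,p}X\cong p^{-1}(L)$, and your inverse formula $[\gamma']\mapsto[y\cdot\gamma']$ is the paper's map $\Xi$ read after quotienting. What you do differently, and what it buys: (1) you isolate the identification $\underline{\mathcal X}(Z(R))\cong R\backslash p^{-1}(L)$ as a preliminary lemma, which makes both implications one-line constructions instead of manipulations inside $(Z(R)\times_M X)/\Gamma$; (2) for the crucial transitivity of the $R$- and $R'$-actions on the fibers of $p'|_Y$ and $p|_Y$, the paper first characterizes $Y$ as the closure of $X_{L\cap\mathcal S}^{L'\cap\mathcal S'}$ in $X_L^{L'}$, proves transitivity over $L'\cap\mathcal S'$ using fullness, and then extends to all fibers by a limiting argument that invokes properness of the action; you instead get transitivity everywhere at once from injectivity of $\psi$ together with the identity $[\gamma']=[\epsilon(t(\gamma'))]\Rightarrow\gamma'\in R'$ in $R'\backslash\Gamma'_{L'}$. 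Your argument is shorter and avoids the density/limit step entirely.

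Two points you should still flesh out to reach the level of detail the paper (mostly) supplies. First, the smooth-structure bookkeeping: that $q:p^{-1}(L)\to R\backslash p^{-1}(L)$ is a submersion (this is the paper's Step~2, and it uses that $R$ is closed in $\Gamma_L^L$ so that the $R$-action on $p^{-1}(L)$ is proper --- closedness is implicit in the hypotheses here as elsewhere in the paper), and that your $\bar\Phi$ in (ii)$\Rightarrow$(i) is not merely an equivariant bijection but a diffeomorphism (the paper proves the corresponding $\Xi$ is a submersion; a bijective submersion is then a diffeomorphism). Second, in (i)$\Rightarrow$(ii) you never verify that $p|_Y:Y\to L$ and $p'|_Y:Y\to L'$ are \emph{surjective submersions}, which is part of the definition of a Morita equivalence of subgroupoids. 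For $p'|_Y$ this is immediate from your setup ($p'|_Y$ factors as $q|_Y$ followed by two diffeomorphisms), but for $p|_Y$ it is not: the paper obtains it by showing the construction is symmetric in the two sides ($Y=Y'$) and repeating the $p'$-argument from the other side, and you would need to add the analogous observation. Neither point threatens the architecture of your proof.
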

\begin{rmk}
In fact, the proof will show that the pair $({\mathcal Y},{\mathfrak i})$ that appears in item (ii)
of the proposition is unique when it exists (up to isomorphism, see convention \ref{conv:morita-subgroupoi}).
Recall also from convention \ref{conv:morita-subgroupoi} that  a Morita equivalence of Lie subgroupoids 
of the form $({\mathcal X},{\mathcal Y},{\mathfrak i}) $ is in fact given by a submanifold
$Y$ of $X$. Our precise claim is that this submanifold is unique: this follows from step 4 in the proof below.
\end{rmk}
\begin{proof} 
We first prove that {\em (i)} and {\em (ii)}.
Assume that 
$$\underline{\mathcal X}\big(( Z(R),\phi_R )\big) =(Z(R'),\phi_{R'}). $$ 
Denote by $\tilde{L}, \tilde{L'}$ the submanifolds of $Z(R) $ and $Z(R') =\underline{\mathcal X}\big(( Z(R),\phi_R )\big)$ respectively, constructed as in Example \ref{exampleOfLcomp}, to which the restrictions of $\phi$ and $\phi' $ respectively is a biholomorphism / diffeomorphism onto $L$ and $L'$ respectively.

{\em Step 1. Construction of $Y$.}
Since $p$ is a submersion, and since the restriction of $\phi$ to $\tilde{L}$ is invertible,
 $$P:= \tilde{L} \times_{\phi,L,p} X$$
  is a submanifold of $Z(R) \times_{\phi_R,M,p} X$, and the projection onto the 
second component is in fact an isomorphism on its image, which is precisely $p^{-1}(L) \subset X$. 
Let $Y$ be the inverse image of $ \tilde{L'}$ through the restriction to $P$ of the natural projection:
 $$ \Pi : Z(R) \times_{\phi, M, p} X  \mapsto \frac{Z(R) \times_{\phi, M, p} X }{ (z,x)\sim (z
  \gamma^{-1}, \gamma x ) } .$$
The set $Y$ is by construction a subset of $P$, but since $P$ can be seen as a subset of $X$,
we may consider $Y$ as a subset of $X$. 
We have to show that  $Y \subset X$ gives a Morita equivalence 
between $R \toto L $ and $R' \toto L' $.
This fact is established in the steps 2,3 and 5 below.

{\em Step 2. $Y \subset X$ is a submanifold.} 
To prove this fact, it suffices to show that the restriction of $\Pi $ to $P$ is a submersion onto $Z(R')$,
which can be done as follows. Let $z' \in Z(R')$ be a point, $(z,x) \in \tilde{L} \times_{\phi,M,p}X $
with $\Pi (z,x)=z'$. Choose $u \in T_{z'} Z(R')$ a tangent vector,
and $\epsilon \mapsto z'(\epsilon) $ an infinitesimal path corresponding to $u$. 
Since the natural projection map $ Z(R) \times_{\phi,M,p} X \to Z(R')  $ is a submersion,
there exists an infinitesimal path, starting at $(z,x)$, 
  $$\epsilon \mapsto \big(z(\epsilon), x(\epsilon)\big) $$
in $Z(R) \times_{\phi,M,p} X \to Z(R') $ that project on $\epsilon \to z'(\epsilon)$. Since the natural projection $ \Gamma_L \mapsto R \backslash \Gamma_L $ is also a surjective submersion, there exists an infinitesimal path $\epsilon \mapsto \gamma (\epsilon) \in \Gamma_L$
which projects to $z(\epsilon)$. In particular, $z(\epsilon) \cdot \gamma^{-1}(\epsilon)$ belongs
to $\tilde{L}$ by construction for all value of $\epsilon$, so that
$$ \epsilon \mapsto \big(  z(\epsilon) \cdot \gamma^{-1} (\epsilon) , \gamma(\epsilon) \cdot x(\epsilon) \big)  $$
is a path in $P $.
The image through $P$ of this last path is $z'(\epsilon) $ again. Hence $P$ is a submersion, and $Y $ is a submanifold of $X$.

{\em Step 3. The action the $R$-action on $Y$.} 
It follows directly from the definition of $Y$ that, for every $y \in Y $, and every compatible $r \in R, r' \in R'$, $r \cdot y \cdot r'  $ is again an element in $Y$.  Since the left $\Gamma$-action  on $X$ is free, the left action of $R \toto L$ on $Y$ is again free. Since the left $\Gamma$-action  on $X$ is proper,  and since $R \toto L$ is a closed subgroupoid, the left action of $R \toto L$ on $Y$ is again proper. 

{\em Step 4. A characterization of $Y$.} 
 Since $\Pi$ is a submersion, the inverse image of $\tilde{L}' \cap {\mathcal S}$ through $\Pi$
is an open and dense subset of $Y$. This subset is clearly equal to 
$$X_{L \cap {\mathcal S}}^{L' \cap {\mathcal S}'}=  p^{-1}(L \cap {\mathcal S}) \cap (p')^{-1}(L' \cap {\mathcal S}') $$
 so that $Y$, being a closed subset of $X_L^{L'}:= p^{-1}(L) \cap (p')^{-1}(L') $ by construction, is in fact equal to the closure of $X_{L \cap {\mathcal S}}^{L' \cap {\mathcal S}'}   $ in $X_L^{L'}$. In equation:
 $$ .$$

{\em Step 4. The $R$-action is transitive on the fibers of $p'$.} 
 Since $R \toto L$ is full in ${\mathcal S} $, the action of $R \toto L$ is transitive on the fibers of 
 $$p': X_{L \cap {\mathcal S}}^{L' \cap {\mathcal S}'} \mapsto {L' \cap {\mathcal S}'} ,$$
since if $y^1,y^2 \in X_{L \cap {\mathcal S}}^{L' \cap {\mathcal S}'} $ satisfy $p'(y^1)=p'(y^2)$, 
 then the unique element $\gamma \in \Gamma$ with $y^1=\gamma \cdot y^2$ has its sources and targets
in $L \cap {\mathcal S}$, hence its belongs to $R$. Let us show that the $R$-action being proper, it has to be also transitive on all the fibers of $ p_{|_Y}' $. Let $y^1,y^2 \in Y$ be two elements with $p'(y^1)=p'(y^2) $. Since $p'_{|_Y}$ is a submersion, there exists sequences $(y^1_n)_{n \in {\mathbb N}} $, $(y^2_n)_{n \in {\mathbb N}} $ in $(p')^{-1} (L \cap {\mathcal S})$ that converge to $y^1$ and $y^2 $ respectively, and such that 
   $$ p'(y^1_n) = p'( y^2_n ) \mbox{ for all $n \in {\mathbb N}$} .$$ 
There exist a sequence $(r_n)_{n \in {\mathbb N}} $ s.t. $y^1_n=r_n \dot y^2_n $ for all $n \in {\mathbb N}$.
The action being proper, one can extract a subsequence of the sequence $(r_n)_{n \in {\mathbb N}}$ converges to an element $r \in R$
which satisfies $y^1 = r\dot y^2$.

{\em Step 5. The restriction of $p'$ is a submersion onto $L$, and the $R'$-action is free, proper,
and transitive on the fibers of $p'$.} 
In our way to prove that $Y$ gives a Morita equivalence between $R$ and $R'$, we have only obtained so far half of the requirements.
 The second half can be in fact obtained by symmetry of the whole picture. By inverting the roles of $L$ and $L'$ in the previous constructions, one obtains an other subset $Y' $ of $X$. More precisely, $Y'$ is the inverse image of $\tilde{L} $ through the restriction 
to $P' = X \times_{ p',L,\phi'} \tilde{L} $ (which can be seen as a subset of $X$) of the natural projection 
$$ Z \times_{\phi, M, p} X  \mapsto \frac{Z \times_{\phi, M, p} X }{ (z,x)\sim (z \gamma^{-1}, \gamma x ) } $$ 
As before, we can arrive at the conclusion that $Y'$ is the closure in $X_L^{L'}$ of $ X_{L \cap {\mathcal S}}^{L' \cap {\mathcal S}'}  $. In particular, we have $Y' =Y $. Therefore, since we have already proven that the restriction of $p'$ to $Y$ is a submersion onto $L'$, the fibers of which are given the free and proper $R$-action, we can conclude without additional effort, due to that symmetry, that the restriction of  $p$ to $Y$ is a submersion onto $L$, the fibers of which are given by the free and proper $R'$-action.

This completes the first part of the proof.
    
\vspace{1cm}
We now turn our attention to the other direction. Assume that {\em (ii)} is satisfied, i.e. that there exists 
a Morita equivalence $({\mathcal X},{\mathcal Y},{\mathfrak i}) $ between the subgroupoids $R \toto L $ of $\Gamma \toto M$ and $R' \toto L' $ of $\Gamma' \toto M' $, and that this Morita equivalence is given by a submanifold $Y \subset X$ (see convention
\ref{conv:morita-subgroupoi}).
 
For all $ (\gamma, x)  \in \Gamma_L \times_{t,M,p }X$, there exists an element $\gamma' \in \Gamma_{L'}'$ which satisfies that $\gamma \cdot x \cdot (\gamma')^{-1} \in Y  $. This element is not unique, but two of them differ by left multiplication by an element of $R'$.
Hence, we have a well-defined map:
 $$\Xi : \Gamma_L \times_{t,M,p }X \mapsto Z(R') .$$
Let us prove that this map is a submersion. Choose an arbitrary $z' \in Z(R')$ and $u \in T_{z'} Z(R')$. Let $(\gamma,x) \in  \Gamma_L \times_{t,M,p }X $ such that $\Xi (\gamma,x)= z $, and let $\epsilon \mapsto \gamma'(\epsilon)$ by a path in $\Gamma_{L'}'$ whose image through the natural projection onto $Z(R') $ is an infinitesimal path corresponding to $u$. Let $\epsilon \mapsto x (\epsilon) $ be an infinitesimal path in $X$ starting from $ x \in X $ such that $x(\epsilon) \cdot (\gamma'(\epsilon))^{-1}$
is well defined for all $\epsilon$ small enough. By construction, the path 
$$ \epsilon \mapsto p'\big( x(\epsilon) \cdot (\gamma'(\epsilon))^{-1} \big) $$   
takes in values in $L'$, so that there exists an infinitesimal path $\epsilon \mapsto \gamma (\epsilon) $
such that $ \gamma(\epsilon) \cdot x(\epsilon) \cdot (\gamma'(\epsilon))^{-1} $ is in $ Y$
for all $\epsilon $ small enough. By construction the path
 $$ \epsilon \mapsto  \Xi \big(\gamma(\epsilon) , x(\epsilon)\big) $$ 
is the path $\epsilon \mapsto [\gamma'(\epsilon)]$ (where $[\cdot]$ stands for the class of an element in
$\Gamma_{L'}' $ modulo $R'$), i.e. is an infinitesimal path that corresponds to $ u$. This completes the proof of the claim.

The submersion $\Xi$ goes to the quotient under the right-action of $R \toto L$ on $\Gamma_L $ to define a submersion 
  $$ Z(R) \times_{\phi,M,p }X \mapsto Z(R')  $$
 which, in turn,  goes the quotient with respect to the diagonal action of $\Gamma \toto M$ on 
 $Z(R) \times_{\phi,M,p }X $ to eventually define a submersion:
   $$ \underline{\mathcal X} \big( Z(R) \big) = \frac{Z(R) \times_{\phi,M,p }X}{ (z,x)\sim (z
  \gamma^{-1}, \gamma x ) }  \mapsto Z(R').$$
   This map can be easily checked to be one-to-one and equivariant w.r.t. the right $\Gamma'$-action. It is therefore an isomorphism of equivariant resolution.
\end{proof}

Altogether, Proposition \ref{fund:prop}, and the constructions given
in Sections \ref{sec:subgrou_to_reso} and \ref{sec:reso_to_subgrou}
amount to the following theorem.

\begin{theo}\label{fund:theo}
Let $\Gamma \toto M $ be a Lie groupoid, and ${\mathcal S} $ a
$\Gamma$-stable submanifold in $M$. There is a natural one-to-one
correspondence between $\Gamma $-resolutions of $\overline{\mathcal S}
$ and  substacks of $[\Gamma] $ full in ${\mathcal S} $.  Under this
correspondence, surjective resolutions corresponds to surjective
substacks and proper resolutions correspond to proper substacks. 
\end{theo}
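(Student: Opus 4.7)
The plan is to assemble the two constructions developed in the previous subsections into mutually inverse maps, and then invoke Proposition \ref{fund:prop} to control well-definedness on equivalence classes. In one direction, given a representative $(\Gamma',R',{\mathcal X})$ of a substack full in ${\mathcal S}$, I form the $\Gamma'$-resolution $(Z(R'),\phi_{R'})$ of $\overline{{\mathcal S}'}$ via Proposition \ref{prop:LGtoSt}, where ${\mathcal S}'=\underline{\mathcal X}({\mathcal S})$, and transport it back to a $\Gamma$-resolution $\underline{\mathcal X}^{-1}(Z(R'),\phi_{R'})$ of $\overline{\mathcal S}$; the compatibility results of Section \ref{Morita-resolution} guarantee that this is indeed a $\Gamma$-resolution of $\overline{\mathcal S}$. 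In the other direction, given a $\Gamma$-resolution $(Z,\phi)$ of $\overline{\mathcal S}$, I use the direct product construction of Section \ref{sec:reso_to_subgrou} to obtain the representative $(\widehat\Gamma,\widehat R,{\mathcal X})$ of a substack of $[\Gamma]$ full in ${\mathcal S}$.

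For well-definedness of the first assignment, I apply the implication (ii)$\Rightarrow$(i) of Proposition \ref{fund:prop}: if $(\Gamma_1',R_1',{\mathcal X}_1)$ and $(\Gamma_2',R_2',{\mathcal X}_2)$ are Morita-equivalent representatives of the same substack, then there is a Morita equivalence of subgroupoids whose underlying equivalence of Lie groupoids is ${\mathcal X}_2\circ{\mathcal X}_1^{-1}$, whence $\underline{{\mathcal X}_1}^{-1}(Z(R_1'),\phi_{R_1'})=\underline{{\mathcal X}_2}^{-1}(Z(R_2'),\phi_{R_2'})$. That the two assignments are mutually inverse is then checked as follows. In one direction, the corollary closing Section \ref{sec:reso_to_subgrou} already gives $(Z,\phi)=\underline{\mathcal X}^{-1}(Z(\widehat R),\phi_{\widehat R})$, so starting from a resolution, applying $\Psi$ and then $\Phi$ recovers $(Z,\phi)$. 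For the other direction, starting from $(\Gamma',R',{\mathcal X})$, the resolution $\underline{\mathcal X}^{-1}(Z(R'),\phi_{R'})$ produces, via the second construction, a representative built from the direct product groupoid; the implication (i)$\Rightarrow$(ii) of Proposition \ref{fund:prop} supplies the missing Morita equivalence of subgroupoids identifying this new representative with $(\Gamma',R',{\mathcal X})$.

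The preservation statements are then immediate: surjectivity and properness of resolutions and of substacks are each invariant under Morita equivalence by the proposition at the end of Section \ref{Morita-resolution} and by Proposition \ref{prop:Moritasubgrou}; combined with parts (4) and (5) of Proposition \ref{prop:LGtoSt}, this transports the properties in both directions. The main obstacle is not any single computation here but rather the bookkeeping that ensures the Morita equivalences chain correctly, so that the compositions of the two assignments really land in the required equivalence classes; all the analytic content has already been absorbed into Propositions \ref{prop:LGtoSt}, \ref{prop:Moritasubgrou}, and \ref{fund:prop}, so the remaining task is to render these compatibilities transparent.
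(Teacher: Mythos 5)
Your proposal is correct and follows essentially the same route as the paper's own proof: both directions of Proposition \ref{fund:prop} for well-definedness and injectivity (phrased by you as the two assignments being mutually inverse), the corollary of Section \ref{sec:reso_to_subgrou} for surjectivity, and items (4)--(5) of Proposition \ref{prop:LGtoSt} (together with the Morita-invariance statements) for the surjective/proper correspondence. The only difference is presentational: the paper establishes injectivity and surjectivity of the single assignment from substacks to resolutions, whereas you exhibit the two constructions as inverse to each other, which is logically equivalent.
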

\begin{proof}
A representative of a substack of $[\Gamma]$ full in ${\mathcal S} $ is by definition a pair
$(\Gamma',{\mathcal X},R') $ with ${\mathcal X} $ a Morita equivalence
from $\Gamma $ to $\Gamma' $ and $\overline{\mathcal S} $ a Lie subgroupoid of $\Gamma' \toto M' $ full in $\underline{\mathcal X}({\mathcal S})$.

To this representative of a substack of $[\Gamma]$, we assign the resolution
$$\underline{\mathcal X}^{-1} \big( Z(R'), \phi_{R'}\big) .$$ 

We now check that this assignment makes sense: let $(\Gamma_1',{\mathcal X}_1,R_1) $ 
and $(\Gamma_2',{\mathcal X}_2,R_2) $ be two representatives of the
  same substack of $[\Gamma] $, that is to say such that
there exits a  Morita equivalence of subgroupoids $({\mathcal X}',{\mathcal Y},{\mathfrak i}) $ between the subgroupoids $R_1'$ and $R_2' $
where ${\mathcal X}'= {\mathcal X}_2 \circ {\mathcal X}_1^{-1}$.
Then, according to Proposition \ref{fund:prop}, we have
$\underline{\mathcal X}'\big(  Z(R_1'),\phi_{R_1'} \big) = \big(  Z(R_2'),\phi_{R_2'} \big) $, or, equivalently,
 $$ \underline{\mathcal X}_1^{-1} \, \big(  Z(R_1'),\phi_{R_1'} \big) = 
\underline{\mathcal X}_2^{-1} \, \big(  Z(R_2'),\phi_{R_2'} \big).$$
In words, the previously defined assignment is compatible with respect to Morita equivalence and defines an assignment from substacks of $[\Gamma] $ full in ${\mathcal  S} $ to resolutions of $\overline{\mathcal S} $.
This assignment is injective, for, if 
  $$ \underline{\mathcal X}_1^{-1} \, \big(  Z(R_1'),\phi_{R_1'} \big) = 
\underline{\mathcal X}_2^{-1} \, \big(  Z(R_2'),\phi_{R_2'} \big),$$
then $ \underline{\mathcal X}'= \underline{\mathcal X}_2 \circ \underline{\mathcal X}_1^{-1} $  maps $ ( Z(R_1'),\phi_{R_1'})$
to  $(Z(R_1'),\phi_{R_1'})$, so that, by Proposition \ref{fund:prop} again, 
$R_1' $  and $R_2' $ are Morita equivalent Lie subgroupoids.

Now, in Section \ref{sec:reso_to_subgrou}, we have constructed, given
a resolution $(Z,\phi)$ a triple $(\Gamma',{\mathcal X},R') $ with $
\underline{\mathcal X}^{-1} \big((Z(R'),\phi) \big)=(Z,\phi) $, which
proves the surjectivity of the assignment. This completes the proof of
the first part of the theorem.  The second part follows from item 4)
and 5) in proposition \ref{prop:LGtoSt}
\end{proof}


\begin{thebibliography}{99}

\bibitem{Beau} A. Beauville,
 Symplectic singularities, {\em Invent. Math.} {\bf 139}, (2000), 541-549.




\bibitem{FuSum}  B. Fu, A survey on symplectic singularities and
  resolutions, {\em Annales de l'Institut Blaise Pascal,} {\bf 13},
  (2006), 209-236.

\bibitem{Fu} B. Fu, Symplectic Resolutions for Nilpotent Orbits,
{\em Invent. Math.} {\bf 151} (2003), 167-186.


\bibitem{desing} C. Laurent-Gengoux. From Lie groupoids to resolutions
  of singularities. Applications to symplectic and Poisson
  resolutions, {\em arXiv:math/0610288}.

\bibitem{LTX} C. Laurent-Gengoux, J.-L. Tu, P. Xu,
  Chern-Weil map for principal bundles over
groupoids,  {\em Math. Z}, {\bf 255}, (2007), 451-491.    

\bibitem{McK} K.C.H. Mackenzie,
{\em  General theory of Lie groupoids and Lie algebroids.}  London
 Mathematical Society Lecture Note Series, {\bf 213}, (2005).

\bibitem{MRW} P. Mulhy, J. Renault, D.P. Xilliams., Equivalence and isomorphism
for groupoid $C^\star$ algebras. {\em J. Operator. Theory.},  {\bf
  17}, (1987), 3-22.

\bibitem{Sha} I. Shafarevich. {\em Algebraic
  geometry}. Springer-Verlag, Berlin. 

\bibitem{Xu} P. Xu. Morita equivalent symplectic groupoids. {\em
  Math. Sci. Res. Inst. Publ}, {\bf 20}, (1991), 291-311.

\end{thebibliography}
\end{document}